\theoremstyle{definition}
\newtheorem{definition}{Definition}
\theoremstyle{theorem}
\newtheorem{proposition}[definition]{Proposition}
\newtheorem{theorem}[definition]{Theorem}
\newtheorem{corollary}[definition]{Corollary}
\numberwithin{equation}{section}
\numberwithin{definition}{section}
\theoremstyle{remark}
\newtheorem{remark}[definition]{Remark}
\newtheorem{example}[definition]{Example}
\def\PP{\mathbb{P}}
\def\AA{\mathcal{A}}
\def\LL{\mathcal{L}}
\def\L2{\mathrm{L}^2}
\def\AA{\mathcal{A}}
\def\GG{\mathcal{G}}
\def\FF{\mathcal{F}}
\def\dd{\mathrm{d}}
 \def\L2{\mathrm{L}^2}
\def\dd{\mathrm{d}}
\def\@tocline#1#2#3#4#5#6#7{\relax
  \ifnum #1>\c@tocdepth 
  \else
    \par \addpenalty\@secpenalty\addvspace{#2}%
    \begingroup \hyphenpenalty\@M
    \@ifempty{#4}{%
      \@tempdima\csname r@tocindent\number#1\endcsname\relax
    }{%
      \@tempdima#4\relax
    }%
    \parindent\z@ \leftskip#3\relax \advance\leftskip\@tempdima\relax
    \rightskip\@pnumwidth plus4em \parfillskip-\@pnumwidth
    #5\leavevmode\hskip-\@tempdima
      \ifcase #1
       \or\or \hskip 1em \or \hskip 2em \else \hskip 3em \fi%
      #6\nobreak\relax
    \hfill\hbox to\@pnumwidth{\@tocpagenum{#7}}\par
    \nobreak
    \endgroup
  \fi}
\begin{document}
\title[Continuous-state branching processes with migration]{Continuous-state branching processes with spectrally positive migration}

\author{Matija Vidmar}
\address{Department of Mathematics, Faculty of Mathematics and Physics, University of Ljubljana}
\email{matija.vidmar@fmf.uni-lj.si}

\begin{abstract}
Continuous-state branching processes (CSBPs) with immigration  (CBIs), stopped on hitting zero, are generalized by allowing the  process governing immigration  to be any  L\'evy process without negative jumps. 
Unlike CBIs, these newly introduced processes do not appear to satisfy any natural affine property on the level of the Laplace transforms of the semigroups.  Basic properties are noted. Explicit formulae (on neighborhoods of infinity) for the Laplace transforms of the first passage times downwards and  of the explosion time are derived. 
\end{abstract}

\thanks{The author is indebted to Cl\'ement Foucart for many stimulating discussions on the topic of this paper (and also for saving him from making an error in Corollary~\ref{corollary:hitting-zero}). Support from the Slovenian Research Agency  (project No. N1-0174) is acknowledged.} 

\keywords{{Continuous-state branching process}, {stochastic differential equation}, {migration}, {first passage time}, {explosion},  {Laplace transform}, {scale function}, {Lamperti's time change}, {spectrally positive L\'evy process}.}
\subjclass[2020]{60J80}
\date{\today}

\maketitle 
\section{Introduction}
\subsection{Motivation and mandate}
CSBPs (resp. CBIs) are the continuous analogues and scaling limits of the basic, but fundamental Bienaym\'e-Galton-Watson branching processes (resp. with independent constant-rate immigration). In \cite{Vidmar} there was added to the latter (so in discrete space) the phenomenon of  ``culling'', which is to say  emmigration (killing) of individuals at constant rate, but never more than one at any given point in time. One spoke of continuous-time Bienaym\'e-Galton-Watson processes with immigration and culling. Then \cite[Remark~2.1]{Vidmar} it was noted that these in turn should also allow for a continuous-space version. In this  article we construct said continuous-space analogues, christen them continuous-state branching processes with spectrallly positive migration (CBMs) [because unlike in the discrete-space case it is no longer possible (in general) to, as it were, separate out immigration and culling, so it is no longer appropriate to speak of the latter separately], derive their basic properties, finally we study their first-passage times downwards and explosion times (on the level of the Laplace transforms). The main results are as follows:  Theorem~\ref{prop:cbic-sde-method} for the construction and basic properties and Proposition~\ref{proposition:lamperti-one} for a Lamperti-style representation; Theorem~\ref{entrancetimeCBIculling}  for the first passage times and Theorem~\ref{thm:explosions} for explosions.

\subsection{General notation}\label{subsection:general-notation}
So as not to disturb the main line of the text too much we gather here some pieces of notation that we shall use frequently. Thus for a probability measure $\PP$ and a random element $Y$ defined thereunder: (i) $Y_\star\PP$ will be the law of $Y$ under the probability $\PP$, i.e. the probability measure $(A\mapsto \PP(Y\in A))$, its domain being understood from context; (ii) when $Y$ is numerical $\PP[Y]:=\int Y\dd \PP$ shall be its expectation under $\PP$; (iii) when further $\GG$ is a sub-$\sigma$-field, $\PP[Y\vert \GG]:=\mathbb{E}_\PP[Y\vert\GG]$ will designate the conditional expectation of $Y$ given $\GG$ under $\PP$. The symbols $\uparrow\uparrow$ and $\uparrow$ mean strictly increasing and nondecreasing, respectively; analogously for $\downarrow\downarrow$ and $\downarrow$.

\section{Construction of CBMs and first properties}
We are given two Laplace exponents of L\'evy processes having no negative jumps: \[\Psi_b(x):=\frac{\sigma^2_b}{2}x^2-\gamma_bx + \int \left(e^{-xh}-1+xh\mathbbm{1}_{(0,1]}(h)\right)\pi_{b}(\dd h),\quad x\in [0,\infty),\] 
where $\pi_b$ is a measure on $(0,\infty)$  satisfying $\int (1\wedge h^2) \pi_{b}(\dd h)<\infty$, $\sigma_b\in[ 0,\infty)$, $\gamma_b\in \mathbb{R}$;
 \[\Psi_m(x):=\frac{\sigma^2_m}{2}x^2-\gamma_mx + \int \left(e^{-xh}-1+xh\mathbbm{1}_{(0,1]}(h)\right)\pi_{m}(\dd h),\quad x\in [0,\infty),\] 
with the analogous qualifications on $(\pi_m,\sigma_m,\gamma_m)$. The subscripts $b$ and $m$ stand for branching and migration, respectively. The corresponding generators are given by
\begin{align*}
\mathcal{L}^{\Psi_b}f(z)&:=\frac{\sigma^2_b}{2}f''(z)+\gamma_b f'(z)+ \int_{0}^{\infty}\left(f(z+h)-f(z)-hf'(z)\mathbbm{1}_{(0,1]}(h)\right)\pi_b(\dd h)\\
\mathcal{L}^{\Psi_m}f(z)&:=\frac{\sigma^2_m}{2}f''(z)+\gamma_m f'(z)+ \int_{0}^{\infty}\left(f(z+h)-f(z)-hf'(z)\mathbbm{1}_{(0,1]}(h)\right)\pi_m(\dd h)
\end{align*}
for $f\in C^2_0(\mathbb{R})$ (i.e. for twice continuously differentiable $f:\mathbb{R}\to \mathbb{R}$ with $f,f',f''$ all vanishing at infinity), $z\in \mathbb{R}$; more generally $\mathcal{L}^{\Psi_b}f(z)$, $\mathcal{L}^{\Psi_m}f(z)$ are defined by the right-hand sides of the above display whenever the expressions appearing in them are defined. Let also  $\mu$ be a probability on the Borel sets of $[0,\infty)$, to be thought of as the initial distribution of the CBM. 

On a filtered probability space $(\Omega,\mathcal{H},\FF=(\FF_t)_{t\in [0,\infty)},\PP)$ satisfying the usual assumptions we prepare the following independent processes: an $\FF$-Poisson random measure  $\mathcal{M}_b(\dd s,\dd v,\dd h)$ on $[0,\infty)^3$ with intensity  $\dd s \dd v\pi_b(\dd h)$ (for Poisson random measures time will always be the first coordinate; also, when speaking of adaptedness to $\FF$, independent increments relative to $\FF$, etc., the measures are canonically identified with the associated point processes), two independent standard $\FF$-Brownian motions $W$ and $B$, an $\FF$-Poisson random measure $\mathcal{N}_m(\dd s,\dd h)$ on $[0,\infty)^2$ with intensity $\dd s\pi_m(\dd h)$. $\bar{ \mathcal{N}}_m$ and $\bar{\mathcal{M}}_b$ denote the compensated versions of $\mathcal{N}_m$ and $\mathcal{M}_b$, respectively. There is also an $\FF_0$-measurable random variable $X_0$ satisfying ${X_0}_\star\PP=\mu$. Remark that the sort of constellations described just now for sure exist for any given triplet $(\mu,\pi_b,\pi_m)$. Note also that automatically $(\mathcal{M}_b,W,B,\mathcal{N}_m)$ has in fact $\FF$-independent increments (jointly, not just each component separately) \cite[Theorem~II.6.3, Eq.~(II.6.12)]{ikeda1989stochastic}. 

We combine $X_0$, $\mathcal{N}_m$ and $B$ according to L\'evy-It\^o into the L\'evy process $X$ having no negative jumps, 
$$X_t=X_0+\sigma_m B_t+\gamma_m t+\int_{(0,t]\times [0,1]}h\bar{\mathcal{N}}_m(\dd s,\dd h)+\int_{(0,t]\times (1,\infty)}h\mathcal{N}_m(\dd s,\dd h),\quad t\in [0,\infty),\text{ a.s.-}\PP,$$
 in the filtration $\FF$, with Laplace exponent $\Psi_b$, and starting law $\mu$.  For $f$ and $z$ for which the right-hand side is defined  we also set
\begin{equation}
\mathcal{A}f(z):=\mathcal{L}^{\Psi_m}f(z)+z\mathcal{L}^{\Psi_b}f(z).
\end{equation}

Now the stage is set for the construction of the CBMs.

\begin{theorem}[SDE construction of CBMs]\label{prop:cbic-sde-method}
There exists a $\PP$-a.s. unique c\`adl\`ag, nonnegative real, $\FF$-adapted process  with lifetime $\zeta$, denoted $Y=(Y_t)_{t\in [0,\zeta)}$, having $0$ as an absorbing state, no negative jumps, and such that with $\tau_0$ the first entrance time into $\{0\}$ by $Y$, a.s.-$\PP$,
\begin{equation}\label{CBIcullingSDE}
Y_t=X_{t\land \tau_0}+\int_{(0,t]\times [0,Y_{s-}]\times [0,1]} \!\!\! \!\! \!\!\!\! \!\! \!\!\!\! \!\! \!\!\!\! \!\! \!\!\!\! \!\! \!\! \!\!h\bar{\mathcal{M}}_b(\dd s,\dd v,\dd h) +\int_{(0,t]\times [0,Y_{s-}]\times (1,\infty)} \!\!\! \!\! \!\!\!\! \!\! \!\!\!\! \!\! \!\!\!\! \!\! \!\!\!\! \!\! \!\! \!\!h{\mathcal{M}}_b(\dd s,\dd v,\dd h) +\sigma_b\int_{0}^{t}\sqrt{Y_{s}}\dd W_s+\gamma_b\int_{0}^tY_s\dd s
\end{equation}
for  $t\in [0,\zeta)$, also $\sup_{[0,\zeta)}Y=\infty$ a.s.-$\PP$ on $\{\zeta<\infty\}$ (implicitly, necessarily $\zeta>0$ a.s.-$\PP$).  The process $Y$ then further enjoys the following properties:
\begin{enumerate}[(i)]
\item\label{prop:cbic-sde-method:i}  it is adapted to the $\PP$-augmented natural filtration generated by $X_0$, $B$, $W$, $\mathcal{N}_m$, $\mathcal{M}_b$ (we mean of course $X_0$ as a constant process here; $Y$ is viewed as a process on $[0,\infty)$ by transferring it to the cemetery (natural: $\infty$) after $\zeta$) and its $\PP$-law, $\PP_\mu:=Y_\star \PP$ (completion implicit), is uniquely determined by the triplet $(\mu,\Psi_b,\Psi_m)$;
 \item\label{prop:cbic-sde-method:ii} it is quasi left-continuous on $[0,\zeta)$ in the filtration $\FF$, in the sense that for any sequence $(S_n)_{n\in \mathbb{N}}$ of $\FF$-stopping times that is $\uparrow$ a.s.-$\PP$ to some limit $S$ one has $\lim_{n\to\infty}Y(S_n)=Y(S)$ a.s.-$\PP$ on $\{S<\zeta\}$;
 \item\label{prop:cbic-sde-method:strongMarkov} it is strong Markov on $[0,\zeta)$ in the filtration $\FF$  in the sense that for any $\FF$-stopping time $S$, $\FF_S$ is $\PP$-independent of $Y_{S+\cdot}$ given $Y_S$ on $\{S<\zeta\}$, furthermore, for any nonnegative measurable map $G$, $\PP[G(Y_{S+\cdot})\vert S<\zeta]=\PP_\nu[G]$, where $\nu$ is the $\PP$-law of $Y_S$ conditionally on $\{S<\zeta\}$ (assuming of course $\PP(S<\zeta)>0$);
 \item\label{prop:cbic-sde-method:iv}  $\lim_{\zeta-}Y=\infty$ a.s.-$\PP$ on $\{0<\zeta<\infty\}$;
\item\label{prop:cbic-sde-method:mtg} 
\begin{enumerate}[(I)]
\item for any $\{\alpha,\bar\alpha\}\subset [0,\infty)$ and any $f\in C^2_c([0,\infty))$ (i.e. $f$ compactly supported and admitting a $C^2$ extension to a neigborhood of $[0,\infty)$) satisfying $\LL^{\Psi_m}f(0)=0$, the process $M$ given by 
\begin{align*}
M_t:=&f(Y_t)e^{-\alpha t-\bar\alpha\int_0^tY_s\dd s}\mathbbm{1}_{\{t<\zeta\}}-f(Y_0)\\
&-\int_0^{t\land \zeta}e^{-\alpha s-\bar\alpha\int_0^sY_u\dd u}\left(\mathcal{A}f(Y_s)-\alpha f(Y_s)-\bar\alpha Y_s f(Y_s)\right)\dd s,\quad  t\in [0,\infty),
\end{align*} 
is an $\FF$-martingale under $\PP$, vanishing at zero, that is bounded up to every deterministic time, so that in particular $\mathcal{A}$ is the generator of $Y$ on $\{g\in C^2_c([0,\infty)): \LL^{\Psi_m}g (0)=0\}$; 
\item\label{prop:cbic-sde-method:mtg:II}  if merely $f\in C^2([0,\infty))$ is bounded and $\LL^{\Psi_m}f(0)=0$, then the same process $M$ (restricted to $[0,\zeta)$) is a local martingale on $[0,\zeta)$ in $\FF$ under $\PP$, in the sense that there exists a sequence $(S_n)_{n\in \mathbb{N}}$ of $\FF$-stopping times that is $\uparrow\zeta$, each member of which is  $<\zeta$ a.s.-$\PP$ on $\{\zeta<\infty\}$, and such that $M^{S_n}=M_{S_n\land \cdot }$ is an $(\FF,\PP)$-martingale for each $n\in \mathbb{N}$.
\end{enumerate}
 \end{enumerate}
\end{theorem}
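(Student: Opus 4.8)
The plan is to read \eqref{CBIcullingSDE} as a stochastic equation of the Dawson--Li/Fu--Li type for continuous-state branching with immigration, the only genuinely new feature being that the additive driving term is the general spectrally positive migration process $X$ (with exponent $\Psi_m$) rather than a subordinator. The key structural observation is that, before absorption, $X_{t\land\tau_0}=X_t$ enters \eqref{CBIcullingSDE} additively and independently of $Y$, so it will cancel in any difference of two solutions carried by the same noise and thus ought not to obstruct pathwise uniqueness. Accordingly I would first establish pathwise uniqueness and strong existence up to an explosion time $\zeta$, and then deduce \ref{prop:cbic-sde-method:i}--\ref{prop:cbic-sde-method:mtg} one at a time.

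For pathwise uniqueness I would run the Yamada--Watanabe-with-jumps scheme: given two solutions $Y,Y'$, set $D=Y-Y'$, note that for $t<\tau_0\land\tau_0'$ the migration cancels, and apply functions $\phi_k\uparrow|\cdot|$ to $D$. The square-root coefficient is absorbed by the usual estimate on $\tfrac12\sigma_b^2(\sqrt{Y_s}-\sqrt{Y_s'})^2\phi_k''(D_s)$, while the branching jumps, whose intensity on the symmetric difference $[0,Y_{s-}]\triangle[0,Y_{s-}']$ is proportional to $|D_{s-}|$, are controlled by the spectral-positivity (Fu--Li) bound; Gronwall then forces $\PP[|D_{t\land\tau_0\land\tau_0'}|]=0$, and absorption at $0$ (all coefficients vanish there and $X$ is frozen at $\tau_0$) propagates equality past the hitting time of $0$. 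Strong existence would follow from weak existence plus pathwise uniqueness: I would truncate the jump measures to $\pi_b\mathbbm{1}_{(0,n]}$, $\pi_m\mathbbm{1}_{(0,n]}$, solve the bounded-jump equations (no explosion, by linear growth), and pass to the limit by interlacing the large jumps, setting $\zeta:=\lim_n\inf\{t:Y_t\ge n\}$; on $\{\zeta<\infty\}$ this exhausts every level, giving $\sup_{[0,\zeta)}Y=\infty$.

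With existence and uniqueness in hand the remaining properties are standard. Strong solvability gives the adaptedness in \ref{prop:cbic-sde-method:i}, and pathwise uniqueness gives uniqueness in law, so $\PP_\mu$ depends only on the coefficients, i.e.\ on $(\mu,\Psi_b,\Psi_m)$. Since on $[0,\zeta)$ the jumps of $Y$ occur precisely at the totally inaccessible atoms of $\mathcal{M}_b$ and $\mathcal{N}_m$, no announceable time can coincide with a jump, which is \ref{prop:cbic-sde-method:ii}. The strong Markov property \ref{prop:cbic-sde-method:strongMarkov} is the usual consequence of pathwise uniqueness: the noise shifted by an $\FF$-stopping time $S$ is independent of $\FF_S$ with the same law, $Y_{S+\cdot}$ solves the same equation from $Y_S$, and uniqueness identifies its conditional law as $\PP_{Y_S}$. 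For \ref{prop:cbic-sde-method:mtg} I would apply It\^o's formula to $f(Y_t)E_t$ with $E_t=e^{-\alpha t-\bar\alpha\int_0^tY_s\dd s}$ (a continuous finite-variation factor, hence no extra bracket term); the $\dd t$-drift of $f(Y_t)$ collects to $\mathbbm{1}_{\{t<\tau_0\}}\LL^{\Psi_m}f(Y_t)+Y_t\LL^{\Psi_b}f(Y_t)$, the spatial integral $\int_0^{Y_{s-}}\dd v$ producing the factor $Y$ in front of $\LL^{\Psi_b}$, and it is exactly the hypothesis $\LL^{\Psi_m}f(0)=0$ that makes this equal $\AA f(Y_t)$ for every $t$ (both sides vanishing once $Y$ is absorbed at $0$). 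In case (I) compact support renders $f(Y_t)$, $\LL^{\Psi_m}f$ and $z\mapsto z\LL^{\Psi_b}f(z)$ bounded (the last vanishing beyond $\supp f$ since $h>0$), so $M$ is bounded up to every deterministic time and the a priori local martingale is a true one; for \ref{prop:cbic-sde-method:mtg:II} I would localize with $S_n=\inf\{t:Y_t\ge n\}$, which is $\uparrow\zeta$ and $<\zeta$ on $\{\zeta<\infty\}$.

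Property \ref{prop:cbic-sde-method:iv} I would obtain by excluding oscillation. On $\{0<\zeta<\infty\}$ we already know $\sup_{[0,\zeta)}Y=\infty$; if $\liminf_{\zeta-}Y$ were finite, pick $L$ strictly between it and $+\infty$. As $Y$ has no negative jumps, every passage from above $L$ down below $L$ is continuous and hits $L$ exactly, so there would be infinitely many excursions away from and back to $L$ accumulating at $\zeta$. By the strong Markov property \ref{prop:cbic-sde-method:strongMarkov}, restarting from the non-sticky value $L>0$, the successive excursion durations are i.i.d.\ and a.s.\ strictly positive, whence their sum is a.s.\ infinite---contradicting that they fit inside $[0,\zeta)$ with $\zeta<\infty$; hence $\lim_{\zeta-}Y=\infty$. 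I expect the main obstacle to be the pathwise-uniqueness estimate, namely reconciling the Yamada--Watanabe treatment of the $\sqrt{\,\cdot\,}$ coefficient with the branching jump terms while checking that the general, possibly downward-drifting, migration $X$ together with the absorption at $0$ leaves the Fu--Li argument intact.
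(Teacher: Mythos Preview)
Your overall strategy is sound and matches the paper's in spirit: establish pathwise uniqueness and strong existence for \eqref{CBIcullingSDE} up to explosion, then read off \ref{prop:cbic-sde-method:i}--\ref{prop:cbic-sde-method:mtg}. The tactical choices, however, differ. The paper does not redo Yamada--Watanabe from scratch; instead it truncates the \emph{state}, replacing $Y_{s-}$ by $(Y_{s-}\lor 0)\land n$ in all branching coefficients, so that the resulting equation \eqref{eq:sde-with-n} falls directly under the Li--Pu theory \cite{li-pu}, which delivers a pathwise unique strong global solution $Y^n$ in one stroke. The lifetime is then $\zeta=\lim_n\zeta^n$ with $\zeta^n$ the first exit of $(Y^n)^{\tau_0^n}$ from $[0,n]$, and $Y=\lim_n Y^n$ on $[0,\zeta)$; uniqueness of the full $Y$ is obtained by comparing any two candidate solutions to $Y^n$ on $[0,\zeta^n]$. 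Your scheme instead truncates \emph{jump sizes} and interlaces the large jumps. This also works, but you must argue that the interlaced large-jump times cannot accumulate at a finite time without $Y$ leaving every compact---a point that needs care because the branching large-jump rate is proportional to $Y_{s-}$, not bounded. The paper's state-level truncation sidesteps this and, more importantly, automatically localizes the Gronwall estimate: your line ``Gronwall then forces $\PP[|D_{t\land\tau_0\land\tau_0'}|]=0$'' tacitly assumes the difference is integrable, which can fail for explosive branching; you would in any case need to stop at the exit from $[0,n]$ first, and once you do that you are essentially back to the paper's $Y^n$-construction.

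The arguments you sketch for \ref{prop:cbic-sde-method:i}--\ref{prop:cbic-sde-method:mtg} coincide with the paper's. Your treatment of \ref{prop:cbic-sde-method:iv} via i.i.d.\ excursion lengths from a fixed level is precisely the paper's ``strong Markov coupled with the strong law of large numbers'' spelled out, and your It\^o computation and localization $S_n=\inf\{t:Y_t\ge n\}$ for \ref{prop:cbic-sde-method:mtg} match the paper's use of $\zeta^n$.
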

When the process $X$ is a subordinator (resp. the zero process), we recognize in \eqref{CBIcullingSDE} the stochastic equation solved by a CBI process \emph{stopped on hitting zero} with branching mechanism $\Psi_b$ and immigration mechanism $-\Psi_m$ (resp. a CSBP with branching mechanism $\Psi_b$). We add then here a further dynamics in that we allow $X$ to be any L\'evy process without negative jumps. Oscillations or negative drift  of the latter represent the possibility of culling, so that immigration is counterbalanced by killing/emmigrating an individual independently of the population size at constant rate in time. We stress that because of the presence of culling it is only natural, \emph{in general}, to stop the process $Y$ on hitting zero (unlike when $X$ is a (non-zero) subordinator), at least as long as we insist on the state space being $[0,\infty)$ (it is clearly visible if one considers the special case when $\gamma_b$, $\gamma_m$ are both $<0$, $\pi_c$ and $\pi_b$ are non-zero and carried by $(1,\infty)$, $\sigma_b=\sigma_m=0$). This is however not to a priori preclude the possibility that for other, but necessarily non-generic, constellations of input data the $Y$ of \eqref{CBIcullingSDE} could not  be naturally prolongated after $\tau_0$ as a nonnegative process (as happens for CBIs). 

We may also mention here that squared Bessel processes of negative dimension \cite[Definition~3]{bessel-negative} (stopped on hitting zero) are instances of CBMs. The SDE technique is not uncommon in  the branching literature, see e.g. \cite{palau-pardo,fekete_fontbona_kyprianou_2019,10.1214/ECP.v17-1972} and the references therein.\label{page:new-para}
\begin{proof}
Existence. Fix, for the time being, $n\in \mathbb{N}$. We claim that thanks to \cite[Theorems~3.2 and~5.1]{li-pu} there exists a pathwise unique strong (global, no lifetimes) solution $Y^n$ to the stochastic integral equation
\begin{align}\label{eq:sde-with-n}
Y_t^n=&Y^n_0+\int_0^t\left(\gamma_m+\gamma_b ((Y_{s-}^n\lor 0)\land n)\right)\dd s 
+\int_0^t\sigma_b\sqrt{(Y_{s-}^n\lor 0)\land n}\dd W_s+\int_0^t \sigma_m\dd B_s\\\nonumber
&+\int_{(0,t]\times U_0}g_0(Y_{s-}^n,u_0)\overline{N}_0(\dd s,\dd u_0)+\int_{(0,t]\times U_1}g_1(Y_{s-}^n,u_1)N_1(\dd s,\dd u_1),\quad t\in [0,\infty),\\\nonumber
Y_0^n=&X_0,
\end{align}
where  
\begin{align*}
U_0&:=([0,n]\times [0,1])\cup [0,1],\\
 U_1&:=([0,n]\times (1,\infty)) \cup (1,\infty),\\
 N_0(\dd s,\dd h)&:=\mathcal{N}_m(\dd s,\dd h)\text{ for }h\in [0,1],\\
 N_0(\dd s,\dd u_0)&:=\mathcal{M}_b(\dd s,\dd v,\dd h)\text{ for }u_0=(v,h)\in [0,n]\times [0,1],\\
 \overline{N}_0&=\text{ the compensated measure of $N_0$,}\\
 N_1(\dd s,\dd h)&:=\mathcal{N}_m(\dd s,\dd h)\text{ for }h\in (1,\infty),\\
  N_1(\dd s,\dd u_1)&:=\mathcal{M}_b(\dd s,\dd v,\dd h)\text{ for }u_1=(v,h)\in [0,n]\times (1,\infty),\\
   g_0(x,h)&:=h\text{ for }(x,h)\in \mathbb{R}\times [0,1], \\
   g_0(x,u_0)&:=\mathbbm{1}_{[v,\infty)}(x)h\text{ for }(x,u_0)=(x,(v,h))\in \mathbb{R}\times ([0,n]\times [0,1]),\\ 
   g_1(x,h)&:=h\text{ for }(x,h)\in \mathbb{R}\times (1,\infty)\text{ and }\\
   g_1(x,u_1)&:=\mathbbm{1}_{[v,\infty)}(x)h\text{ for }(x,u_1)=(x,(v,h))\in \mathbb{R}\times ([0,n]\times (1,\infty)). 
   \end{align*}
Indeed \eqref{eq:sde-with-n} is really just \eqref{CBIcullingSDE} except that one allows the process $Y^n$ to evolve after it hits negative values (without branching, just following $X$) and that the branching is also ``truncated'' at level $n$ (to preclude explosion), vis-\`a-vis the process $Y$. In fact, we may write more succinctly 
\begin{align} \label{eq:sde-with-n-succ}
Y_t^n=&X_{t}+\int_{(0,t]\times [0,(Y_{s-}^n\lor 0)\land n]\times [0,1]} \!\!\! \!\! \!\!\!\! \!\! \!\!\!\!\! \!\! \!\!\!\! \!\! \!\!\!\! \!\! \!\!\!\! \!\! \!\!\!\! \!\! \!\! \!\!h\bar{\mathcal{M}}_b(\dd s,\dd v,\dd h) \quad +\int_{(0,t]\times [0,(Y_{s-}^n\lor 0)\land n]\times (1,\infty)} \!\!\!\!\! \!\! \!\!\!\! \!\! \!\!\!\!\! \!\! \!\!\!\! \!\! \!\!\!\! \!\! \!\!\!\! \!\! \!\!\!\! \!\! \!\! \!\!h{\mathcal{M}}_b(\dd s,\dd v,\dd h)\\
&+\sigma_b\int_{0}^{t}\sqrt{(Y_{s}^n\lor 0)\land n}\dd W_s+\gamma_b\int_{0}^t(Y_{s}^n\lor 0)\land n\dd s,\nonumber\\
Y^n_0=& X_0.\nonumber
\end{align} However it is the form \eqref{eq:sde-with-n} given above, not \eqref{eq:sde-with-n-succ}, that relates directly to \cite[Eq.~(2.1)]{li-pu}.

Now, strictly speaking the stochastic equation for $Y^n$ does not fall under \cite{li-pu} because of the following minor point: in \cite[Eq.~(2.1)]{li-pu} the Brownian motion is one-dimensional -- in the preceding, two-variate. However, the generalization of the results of  \cite{li-pu} to a setting allowing a multidimensional Brownian motion is straightforward, especially if one of the two Brownian motions is just integrated against a constant (which is the present case). Furthermore, as far as the question of the existence of a strong pathwise unique solution is concerned, the integral $\int_{[0,t]\times U_1}$ may be ignored \cite[Proposition~2.1]{li-pu}. Once this has been noted it is easy to check that all the conditions of \cite{li-pu} required to establish the strong pathwise unique solution $Y^n$ of the above stochastic equation (trivialy adjusted to allow a two-variate Brownian motion) are in fact met. 

Let next $\zeta^n$ be the first time the stopped process $(Y^n)^{\tau^n_0}$ exits the interval $[0,n]$;  here $\tau^n_0$ is the first entrance time into $\{0\}$ by the process $Y^n$. Then $\zeta^{n+1}\geq \zeta^n$ and $Y^{n+1}=Y^n$ on $[0,\zeta^n)$ a.s.-$\PP$. Set $\zeta:=\lim_{n\to\infty}\zeta^n$ and $Y:=\lim_{n\to\infty} Y^n$ on $[0,\zeta)$ a.s.-$\PP$. We get all the properties stipulated for $Y$ in the ``unique existence'' part of the proposition. Thus existence is proved.

Uniqueness. Let $Y^I=(Y_t^I)_{t\in [0,\zeta^I)}$ and $Y^{II}=(Y^{II}_t)_{t\in [0,\zeta^{II})}$ both have the properties listed for $Y$ in the ``unique existence'' part of the proposition. Let $\tau^i_n$ (resp. $\tau_0^i$) be the first exit time from $[0,n]$ (resp. first entrance time into $\{0\}$) of $Y^i$, $n\in \mathbb{N}$, $i\in \{I,{II}\}$. Then, for each $i\in \{I,II\}$, $n\in \mathbb{N}$, the stopped process $(Y^i)^{\tau^i_n\land \tau_0^i}$ satisfies, on the interval $[0,\tau^i_n\land \tau_0^i]\cap [0,\zeta^i)$, the stochastic equation given above for the process $Y^n$. By the pathwise uniqueness of these solutions we obtain $Y^I=Y^{II}$ a.s.-$\PP$ on $[0,\tau^I_n\land \tau^{II}_n\land \tau_0^{I}\land \tau_0^{II}]\cap [0,\zeta^I\land \zeta^{II})$. Since $\sup_{[0,\zeta^i)}Y^i=\infty$ a.s.-$\PP$ on $\{\zeta^i<\infty\}$ and since $Y^i$ is c\`adl\`ag, therefore locally bounded on $[0,\zeta^i)$, we have that $\tau^i_n$ is $\uparrow$ $\zeta^i$ a.s.-$\PP$ as $n\to\infty$, $i\in \{I,II\}$. Therefore, passing to the limit $n\to\infty$, $Y^I=Y^{II}$ a.s.-$\PP$ on $[0,\tau_0^{I}\land \tau_0^{II}]\cap [0,\zeta^I\land \zeta^{II})$. Because $0$ is absorbing for $Y^i$, $i\in \{I,II\}$, it follows further that $Y^I=Y^{II}$ a.s.-$\PP$ on $[0,\zeta^I\land \zeta^{II})$. Next, write $I':=II$ and $II':=I$; because again $\sup_{[0,\zeta^i)}Y^i=\infty$ a.s.-$\PP$ on $\{\zeta^i<\infty\}$ and because $Y^{i'}$ admits left limits on $[0,\zeta^{i'})$ we get $\zeta^i\geq \zeta^{i'}$ a.s.-$\PP$ for each $i\in \{I,II\}$. In conclusion, $\zeta^I=\zeta^{II}$ a.s.-$\PP$ and finally $Y^1=Y^2$ a.s.-$\PP$. Therefore there is a.s.-$\PP$ unique existence.

As for the further properties of $Y$ we have as follows. 

\ref{prop:cbic-sde-method:i}. Adaptedness to the augmented natural filtration is by construction, for each $Y^n$, $n\in \mathbb{N}$, is a strong solution in its own right. That $\PP_\mu$ is uniquely determined by the triplet $(\mu,\Psi_b,\Psi_m)$ is because pathwise uniqueness implies uniqueness in law by a well-known general argument (the method is the same as in the proof of \cite[Theorem~IX.1.7]{MR1725357}). 

\ref{prop:cbic-sde-method:ii}. Quasi-left continuity is immediate from \eqref{CBIcullingSDE}. 

\ref{prop:cbic-sde-method:strongMarkov}. Let us infer the strong Markov property. On $\{S<\zeta\}$ introduce: $W':=(W_{S+t}-W_S)_{t\in [0,\infty)}$, the increments of $W$ after time $S$; $B':=(B_{S+t}-B_S)_{t\in [0,\infty)}$, the increments of $B$ after time $S$; $X':=Y_S+(X_{S+t}-X_S)_{t\in [0,\infty)}$, the increments of $X$ after $S$ offset by $Y_S$; $\mathcal{M}_b'(\dd s,\dd v,\dd h):=\mathcal{M}_b((\dd s-S)\cap (S,\infty),\dd v,\dd h)$, the shifted Poisson random measure $\mathcal{M}_b$; $\mathcal{N}_m'(\dd s,\dd h):=\mathcal{N}_m((\dd s-S)\cap (S,\infty),\dd h)$, the shifted Poisson random measure $\mathcal{N}_m$; finally $Y':=Y_{S+\cdot}$ with lifetime $\zeta':=\zeta-S$, the shifted process $Y$. Then we have, with $\tau_0'$  the first entrance time of $Y'$ into $\{0\}$, a.s.-$\PP$,
$$Y'_{t}=X'_{t\land \tau_0'}+\int_{(0,t]\times [0,Y_{s-}']\times [0,1]} \!\!\! \!\! \!\!\!\! \!\! \!\!\!\! \!\! \!\!\!\! \!\! \!\!\!\! \!\! \!\! \!\!h\bar{\mathcal{M}}_b'(\dd s,\dd v,\dd h) +\int_{(0,t]\times [0,Y'_{s-}]\times (1,\infty)} \!\!\! \!\! \!\!\!\! \!\! \!\!\!\! \!\! \!\!\!\! \!\! \!\!\!\! \!\! \!\! \!\!h{\mathcal{M}}_b'(\dd s,\dd v,\dd h) +\sigma_b\int_{0}^{t}\sqrt{Y_{s}'}\dd W'_s+\gamma_b\int_{0}^tY'_s\dd s$$ for $t\in [0,\zeta')$ on $\{S<\zeta\}$ (the bar in $\bar{\mathcal{M}}_b'$ designates the compensated measure); also, $0$ is absorbing for $Y'$, $Y'$ has no negative jumps and $\sup_{[0,\zeta')}Y'=\infty$ a.s.-$\PP$ on $\{\zeta'<\infty\}$. By the strong Markov property of $(\mathcal{M}_b,W,B,\mathcal{N}_m)$ in $\FF$ (applied to the stopping time $S'=S\mathbbm{1}_{\{S<\zeta\}}+\infty\mathbbm{1}_{\{\zeta\leq S\}}$; note $S'=S$ on $\{S'<\infty\}=\{S<\zeta\}$) and the very construction of $Y$ this means two things: first, that, $Y'$, being a measurable function of $(Y_S,\mathcal{M}_b',B',W',\mathcal{N}_m')$, is $\PP$-independent of $\FF_S$ given $Y_S$ on $\{S<\zeta\}$; second, ${Y'}_\star \PP(\cdot\vert S<\zeta)=\PP_\nu$ for $\nu:=(Y_S)_\star \PP(\cdot\vert S<\zeta)$. But this is precisely what we have stipulated under the strong Markov property.

\ref{prop:cbic-sde-method:iv}. If it is not the case that $\lim_{\zeta-}Y= \infty$ a.s.-$\PP$ on $\{0<\zeta<\infty\}$, then, for some $c\in (0,\infty)$,  with positive $\PP$-probability, $Y$ hits the level $c$ after having first gone above the level $c+1$, and does so consecutively infinitely many times over in finite time, which is in contradiction with the strong Markov property /coupled with the downwards skip-free property/ and the strong law of large numbers.

\ref{prop:cbic-sde-method:mtg}. The first martingale claim is a consequence of the second by bounded convergence. For the second martingale claim appeal, for each $n\in \mathbb{N}$, to It\^o's formula \cite[Theorem~II.5.1]{ikeda1989stochastic} for the stopped $(\FF,\PP)$-vector semimartingale $$\left(\left(Y_t^n,t,\int_0^tY_s^n\dd s\right)_{t\in[0,\infty)}\right)^{\zeta^n\land \tau_0^n}$$ to obtain $\PP$-a.s. for $t\in [0,\zeta^n\land \tau_0^n]\cap [0,\infty)$,
\begin{align*}
f(Y_t)e^{-\alpha t-\bar\alpha\int_0^tY_s\dd s}-f(Y_0)&=\int_0^{t}e^{-\alpha s-\bar\alpha\int_0^sY_u\dd u} f'(Y_s)(\sigma_m\dd B_s +\sigma_b\sqrt{Y_s}\dd W_s)\\
&\quad +\int_0^{t}e^{-\alpha s-\bar\alpha\int_0^sY_u\dd u}\left(f'(Y_s)(\gamma_m+\gamma_bY_s)-\alpha f(Y_s)-\bar\alpha Y_s f(Y_s)\right)\dd s\\
&\quad +\frac{1}{2}\int_0^te^{-\alpha s-\bar\alpha\int_0^sY_u\dd u}f''(Y_s)(\sigma_m^2\dd s+\sigma_b^2 Y_s\dd s)\\
&\quad +\int_{(0,t]\times (0,\infty)} \!\!\! \!\! \!\!\!\! \!\! \!\!\!\! \!\! e^{-\alpha s-\bar\alpha\int_0^sY_u\dd u}(f(Y_{s-}+h)-f(Y_{s-}))\bar{\mathcal{N}}_m(\dd s,\dd h)\\
&\quad +\int_0^t\int_{(0,\infty)} \!\!\!\! \!\!  e^{-\alpha s-\bar\alpha\int_0^sY_u\dd u}(f(Y_{s}+h)-f(Y_{s})-hf'(Y_s)\mathbbm{1}_{(0,1]}(h))\pi_m(\dd h)\dd s\\
&\quad +\int_{(0,t]\times [0,Y_{s-}]\times (0,\infty)} \!\!\! \!\! \!\!\!\! \!\! \!\!\!\! \!\! \!\!\!\! \!\! \!\!\!\! \!\! \!\! \!\!e^{-\alpha s-\bar\alpha\int_0^sY_u\dd u}(f(Y_{s-}+h)-f(Y_{s-}))\bar{\mathcal{M}}_b(\dd s,\dd v,\dd h)\\
&\quad + \int_0^t\int_{(0,\infty)} \!\!\!\! \!\!  e^{-\alpha s-\bar\alpha\int_0^sY_u\dd u}(f(Y_{s}+h)-f(Y_{s})-hf'(Y_s)\mathbbm{1}_{(0,1]}(h))Y_s\pi_b(\dd h)\dd s\\
&= \int_0^{t\land \zeta}e^{-\alpha s-\bar\alpha\int_0^sY_u\dd u}\left(\mathcal{A}f(Y_s)-\alpha f(Y_s)-\bar\alpha Y_s f(Y_s)\right)\dd s\\
&\quad +\int_0^{t}e^{-\alpha s-\bar\alpha\int_0^sY_u\dd u} f'(Y_s)(\sigma_m\dd B_s +\sigma_b\sqrt{Y_s}\dd W_s)\\
&\quad  +\int_{(0,t]\times (0,\infty)} \!\!\! \!\! \!\!\!\! \!\! \!\!\!\! \!\! e^{-\alpha s-\bar\alpha\int_0^sY_u\dd u}(f(Y_{s-}+h)-f(Y_{s-}))\bar{\mathcal{N}}_m(\dd s,\dd h)\\
&\quad +\int_{(0,t]\times [0,Y_{s-}]\times (0,\infty)} \!\!\! \!\! \!\!\!\! \!\! \!\!\!\! \!\! \!\!\!\! \!\! \!\!\!\! \!\! \!\! \!\!e^{-\alpha s-\bar\alpha\int_0^sY_u\dd u}(f(Y_{s-}+h)-f(Y_{s-}))\bar{\mathcal{M}}_b(\dd s,\dd v,\dd h).
\end{align*}
 On the r.h.s. of the preceding display we recognize local martingales in the last three lines (once we have stopped them at $\zeta^n\land \tau_0^n$). We deduce  that $M^{\zeta_n\land \tau_0^n}=M^{\zeta_n}$ (the equality thanks to $\mathcal{L}^{\Psi_m}f(0)=0$, which renders  $\AA f(0)=0$) is an $(\FF,\PP)$-local martingale for each $n\in \mathbb{N}$, so $M$ is ``locally a local martingale on $[0,\zeta)$''. By the usual trick (basically that of \cite[Lemma~1.35]{jacod1987limit}, mutatis mutandis to handle the lifetime $\zeta$) it follows that $M$ is a local martingale on $[0,\zeta)$ in the sense stipulated. 
\end{proof}
We call the (law of the) process $Y$ as rendered in the preceding theorem a continuous-state branching process with spectrally positive migration (CBM), \textit{branching} mechanism $\Psi_b$,  \emph{migration} one $\Psi_m$, initial law $\mu$. If needed, to emphasize $\mu$, we write $Y^\mu$ and/or $\PP^\mu$ in lieu of $Y$ and/or $\PP$, respectively (which should not be confused with $\PP_\mu=Y_\star \PP=(Y^\mu)_\star (\PP^\mu)$). For $a\in [0,\infty)$ we let $\tau_a$ be the first entrance time of $Y$ into $[0,a]$. \label{page:p-mu}

\begin{remark}
By a standard general argument  (cf. \cite[Theorem~IV.1.1]{ikeda1989stochastic}) one shows that the map $([0,\infty)\ni x\mapsto \PP_x(A))$ is universally measurable for each measurable $A$ and  $\PP[G(Y_{S+\cdot})\vert \FF_S]=\PP_{Y_S}[G]$ holds a.s.-$\PP$ on $\{S<\zeta\}$ in Theorem~\ref{prop:cbic-sde-method}\ref{prop:cbic-sde-method:strongMarkov}, also $\PP_\mu=\int \PP_z\overline{\mu}(\dd z)$. Here we have written (and will continue to write) $\PP_x:=\PP_{\delta_x}$, $x\in [0,\infty)$, for short. 
\end{remark}

\begin{remark}\label{remark:mtg}
In Theorem~\ref{prop:cbic-sde-method}\ref{prop:cbic-sde-method:mtg}, if for some $a\in [0,\infty)$ the initial law $\mu$ is carried by $[a,\infty)$ and if, ceteris paribus, the process $M$ is stopped at $\tau_a$, then, ceteris paribus, one can drop the assumption $\LL^{\Psi_m}f(0)=0$ and it is enough for the $C^2$ and boundedness property to prevail on $[a,\infty)$, and still the same martingale claims hold true. 
\end{remark}
In complete analogy with the discrete-space case \cite[Remark~2.1]{Vidmar}, the process $Y$ verifies  a random time-change integral equation involving two L\'evy processes having no negative jumps, one of which is $X$. 


\begin{proposition}[Lamperti transform for CBMs]\label{proposition:lamperti-one}
On an extension of the underlying probability space there exists a L\'evy process $L$ having no negative jumps, vanishing at zero a.s., with  Laplace exponent $\Psi_b$,  and such that 
\begin{equation}\label{eq:time-change}
Y_t=X_{t\land \tau_0}+L_{\int_0^tY_s\dd s},\quad t\in [0,\zeta),\text{ a.s.},
\end{equation}
also  [in what follows ${}^{-1}$ means the left-continuous inverse]
\begin{enumerate}[(a)]
\item\label{proposition:lamperti-one:a} ${L}$ has independent increments relative to the augmented natural filtration of the pair of processes $({L},(\int_0^{\cdot \land \zeta}{Y}_s\dd s)^{-1})$ initially enlarged by $\sigma({X})$, in particular $L$ is independent of $X$, while 
\item\label{proposition:lamperti-one:b} $X$ has independent increments relative to the augmented join of the natural filtration of $({L},(\int_0^{\cdot \land \zeta}{Y}_s\dd s)^{-1})$, time-changed by $\int_0^{\cdot \land \zeta}{Y}_s\dd s$, and of the natural filtration of $X$.
\end{enumerate}
\end{proposition}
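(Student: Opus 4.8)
The plan is to isolate the branching part of \eqref{CBIcullingSDE}, recognize it as a time change (by the additive functional $\int_0^\cdot Y_s\,\dd s$) of a spectrally positive L\'evy process with Laplace exponent $\Psi_b$, and then read off the independence statements from bivariate exponential martingales. Write $R_t:=Y_t-X_{t\land\tau_0}$ for $t\in[0,\zeta)$, the ``branching'' contribution to \eqref{CBIcullingSDE} (the two $\mathcal{M}_b$-integrals together with the $W$- and $\gamma_b$-terms); because $Y$ has no negative jumps it reaches $0$ continuously, so $R$ is constant ($=-X_{\tau_0}$) on $[\tau_0,\zeta)$. Put $A_t:=\int_0^{t\land\zeta}Y_s\,\dd s$, which is continuous, nondecreasing, and strictly increasing on $[0,\tau_0\land\zeta)$, and let $C$ be its left-continuous inverse. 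First I would establish, for every $x\in[0,\infty)$, that
$$P^x_t:=\exp\!\left(-xR_t-\Psi_b(x)\int_0^{t\land\zeta}Y_s\,\dd s\right),\qquad t\in[0,\zeta),$$
is a local martingale on $[0,\zeta)$ in $\FF$. This is exactly the It\^o computation already carried out in the proof of Theorem~\ref{prop:cbic-sde-method}\ref{prop:cbic-sde-method:mtg}: the finite-variation part of $\dd P^x_t/P^x_{t-}$ equals $Y_s\bigl(\tfrac{\sigma_b^2}{2}x^2-\gamma_b x+\int(e^{-xh}-1+xh\mathbbm{1}_{(0,1]}(h))\pi_b(\dd h)-\Psi_b(x)\bigr)\,\dd s=0$, the bracket being precisely $\Psi_b(x)-\Psi_b(x)$.

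Next I would perform the time change. Setting $L^\circ_u:=R_{C_u}$ on $[0,A_{(\tau_0\land\zeta)-})$ and $\GG_u:=\FF_{C_u}$, optional sampling applied to the stopping times $C_u$ turns each $P^x$ into the $\GG$-local martingale $u\mapsto\exp(-xL^\circ_u-\Psi_b(x)u)$; a localization/uniform-integrability argument (these are nonnegative and the $C_u$ are bounded up to deterministic times) upgrades this to $\PP[e^{-x(L^\circ_v-L^\circ_u)}\mid\GG_u]=e^{-(v-u)\Psi_b(x)}$ for all $0\le u\le v$ below the terminal clock value and all $x\ge0$. By L\'evy--Khintchine uniqueness this identifies $L^\circ$ as a process with $\GG$-stationary independent increments and Laplace exponent $\Psi_b$ on the random interval $[0,A_{(\tau_0\land\zeta)-})$. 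On $\{\tau_0<\infty\}$ this interval is only $[0,A_{\tau_0})$ with $A_{\tau_0}<\infty$; here one passes to the announced extension of the probability space, adjoins an independent L\'evy process with exponent $\Psi_b$, and concatenates it at the $\GG$-stopping time $A_{(\tau_0\land\zeta)-}$ to obtain a genuine L\'evy process $L$ on all of $[0,\infty)$ with $L_0=0$. The representation \eqref{eq:time-change} then follows because $C\circ A=\id$ on $[0,\tau_0)$ gives $L_{A_t}=R_t$ for $t<\tau_0$, while for $\tau_0\le t<\zeta$ both sides equal $-X_{\tau_0}$ by the downward skip-free continuity noted above.

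For the independence assertions I would run the computation bivariately. Since the migration noise $(X_0,B,\mathcal{N}_m)$ and the branching noise $(W,\mathcal{M}_b)$ are independent, the migration exponential martingale $Q^y_t:=\exp(-y(X_{t}-X_0)-\Psi_m(y)t)$ is, for $y\ge0$, orthogonal to $P^x$ (disjoint jumps and $\langle W,B\rangle=0$), so the product $P^x_t\,Q^y_{t\land\tau_0}$ is again an $\FF$-local martingale; this encodes the conditional independence of the branching and migration contributions. Time-changing the $P^x$-factor by $C$ and leaving $X$ untouched yields, in the filtration of part~\ref{proposition:lamperti-one:a} (the natural filtration of $(L,C)$ enlarged initially by $\sigma(X)$), the factorized Laplace transform that simultaneously exhibits the $\Psi_b$-L\'evy increments of $L$ and their independence of $\sigma(X)$; specializing $u=0$ gives $L\perp X$. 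Part~\ref{proposition:lamperti-one:b} is the symmetric reading: reversing the roles, $X$ retains $\Psi_m$-L\'evy increments in the join of the $C$-time-changed filtration of $(L,C)$ with the natural filtration of $X$, since the branching data enter there only through the clock $A$, which is measurable in that join.

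The main obstacle I anticipate is the bookkeeping around the time change rather than any single hard estimate: making the optional-sampling/localization rigorous across a clock range that is finite on $\{\tau_0<\infty\}$ and infinite on explosion; concatenating the independent L\'evy piece at the stopping time $A_{(\tau_0\land\zeta)-}$ without destroying stationarity or independence; and, above all, pinning down precisely which process has independent increments in which of the two enlarged/time-changed filtrations of \ref{proposition:lamperti-one:a} and \ref{proposition:lamperti-one:b}. The orthogonality of the branching and migration martingales is what makes the decoupling work, but verifying that it survives both the stopping at $\tau_0$ and the time change is the delicate point.
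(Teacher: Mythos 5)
Your construction of $L$ and of the representation \eqref{eq:time-change} is a legitimate alternative to the paper's route: you time-change the aggregate branching part $R=Y-X_{\cdot\land\tau_0}$ and identify its time-changed version through the exponential local martingales $P^x$, whereas the paper time-changes the two branching drivers \emph{separately} (the Brownian integral and the Poissonian integral), identifies them via L\'evy's and Watanabe's martingale characterizations, and reassembles $L$ by L\'evy--It\^o; moreover the paper adjoins the independent drivers $(H,N)$ on the extension \emph{before} applying those characterizations, so it never needs a concatenation-at-a-stopping-time argument. Two technical points in your version need repair but are fixable: (i) nonnegativity of a local martingale only yields a supermartingale inequality, so the equality $\PP[e^{-x(L^\circ_v-L^\circ_u)}\mid\GG_u]=e^{-(v-u)\Psi_b(x)}$ requires a genuine localization --- e.g.\ stopping when $|R|$ or the clock exceeds $n$, where $P^x$ is bounded because $R$ has no negative jumps --- not the reason you give (the $C_u$ are certainly not bounded); (ii) the concatenation must be performed on $\{\rho<\infty\}$, $\rho:=\int_0^\zeta Y_s\,\dd s$, not merely on $\{\tau_0<\infty\}$, since the clock can also have finite range when $Y$ extinguishes without hitting $0$.

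The genuine gap is in parts \ref{proposition:lamperti-one:a} and \ref{proposition:lamperti-one:b}. Part \ref{proposition:lamperti-one:a} asserts independence of the increments of $L$ from the \emph{initial} enlargement by $\sigma(X)$, hence in particular from the increments of $X$ after $\tau_0\land\zeta$. Your device --- the product local martingale $P^x_tQ^y_{t\land\tau_0}$, time-changed by $C$ --- never sees those increments at all: $Q^y$ is stopped at $\tau_0$, and even unstopped the time change evaluates $X$ only at times $C_u\in[0,\tau_0\land\zeta]$. So these martingales can only couple increments of $L^\circ$ over $[u,v]$ with increments of $X$ over $[C_u,C_v]$; no manipulation of them alone can produce ``specializing $u=0$ gives $L\perp X$''. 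The paper's fix is structural, and it is the key idea you are missing: define $\GG$ as the join of the time-changed filtration $\FF_{\gamma^{-1}}$ with $\sigma(B,\mathcal{N}_m,X_0)\supset\sigma(X)$ (and with $\sigma(H,N)$) \emph{already at time zero}, and prove the martingale characterizations in this initially enlarged filtration; this is legitimate because $(W,\mathcal{M}_b,B,\mathcal{N}_m)$ has jointly $\FF$-independent increments, so enlarging by the migration data preserves the relevant local martingale properties of the branching integrals. Once $L$ is known to be a L\'evy process in $\GG$, claim \ref{proposition:lamperti-one:a} is immediate. Likewise \ref{proposition:lamperti-one:b} is not a ``symmetric reading'' obtained for free: it rests on the filtration identity $(\FF_{\gamma^{-1}})_\gamma=\FF_{\cdot\land\tau_0\land\zeta}$ (plus independence of the adjoined $(H,N)$) to show that the time-changed natural filtration of $(L,\gamma^{-1})$ sits inside $\FF\lor\sigma(H,N)$, in which $X$ does have independent increments. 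If you wish to keep your exponential-martingale route, you must run the whole identification of $L$ in the enlarged filtration from the outset, and separately handle the post-$(\tau_0\land\zeta)$ increments of $X$ (e.g.\ via the strong Markov property of $X$ at the $\FF$-stopping time $\tau_0\land\zeta$); as written, the independence assertions are not proved.
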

\begin{proof}
We may and do assume $\FF$ is the augmented natural filtration of $(X_0,W,\mathcal{M}_b,B,\mathcal{N}_m)$. By first extending (if necesary) the probability space by an independent factor supporting a standard Brownian motion $H$ and a Poisson random measure $N(\dd u,\dd h)$ on $[0,\infty)^2$ with intensity $\dd u\pi_b(\dd h)$, we may and do also assume the latter were there to begin with. 

Put $\gamma_t:=\int_0^{t\land \zeta} Y_s\dd s$ for $t\in [0,\infty)$; then $\gamma\vert_{[0,\zeta\land \tau_0)}$ is $\uparrow\uparrow$, vanishing at zero and continuous. Let $\gamma^{-1}$ be its inverse, defined, $\uparrow\uparrow$, vanishing at zero and continuous on $[0,\rho)$, where $\rho:=\int_0^{\zeta}Y_s\dd s$. Additionally put $\gamma^{-1}:=\zeta\land \tau_0=\lim_{\rho-}\gamma^{-1}$ on $[\rho,\infty)$.  Thus $\gamma^{-1}$ is just the left-continuous inverse of $\gamma$ and $(\gamma^{-1}(u))_{u\in [0,\infty)}$ is a continuous $\uparrow$ family of $\FF$-stopping times, a time-change. Define the filtration $\GG$ as the augmented join of the time-changed filtration $\FF_{\gamma^{-1}}$, of the natural filtration of the pair of processes $(H,N)$, and of $\sigma(B,\mathcal{N}_m,X_0)$ (initial enlargement). Since $\{\rho\leq u\}=\{\gamma^{-1}(u)=\infty\}$ for all $u\in [0,\infty)$, we have that $\rho$ is a $\GG$-stopping  time. For $u\in [0,\rho)$ set  $\tilde W_u:=\int_0^{\gamma^{-1}(u)}\sqrt{Y_s}\dd W_s$ and $\tilde{\mathcal{N}}_b([0,u]\times A):=\int_0^{\gamma^{-1}(u)}\int_0^{Y_{s-}}\int_A \mathcal{M}_b(\dd s,\dd v,\dd h)$, $A\in \mathcal{B}_{[0,\infty)}$ (it is easy to check that the latter specifies uniquely a random measure on $[0,\rho)\times [0,\infty)$). 
 
By time-change (optional sampling) and independent enlargement we see that the process  $W'=\int_0^{\gamma^{-1}(\cdot)}\sqrt{Y}_s\dd W_s$, which agrees with $\tilde{W}$ on $[0,\rho)$, is a $\GG$-continuous local martingale vanishing at zero that is stopped at $\rho$ with terminal value $\tilde{W}_\rho:=\int_0^{\zeta}\sqrt{Y}_s\dd W_s$ a.s. on $\{\rho<\infty\}$. Its quadratic variation process is given by $\langle W'\rangle_u=\int_0^{\gamma^{-1}(u)}Y_s\dd W_s=u\land \rho$ a.s. for $u\in [0,\infty)$. Now define $\tilde{W}_u:=\tilde{W}_\rho+H(u)-H(\rho)$ for $u\in [\rho,\infty)$. Then we have that $\tilde{W}=W'+\mathbbm{1}_{[\![ \rho,\infty )\!)}(H-H(\rho))$ is a $\GG$-continuous local martingale vanishing at zero with increasing process given by $\langle \tilde{W}\rangle_u=u$ a.s. for $u\in [0,\infty)$. By L\'evy's martingale characterization  of Brownian motion \cite[Theorem~II.6.1]{ikeda1989stochastic} it follows that $\tilde{W}$ is a $\GG$-Brownian motion.

Similarly, again by time-change (optional sampling) and independent enlargement, we see that for each Borel set $A$ of $[0,\infty)$ of finite $\pi_b$-measure the process $\int_0^{\gamma^{-1}(\cdot)}\int_0^{Y_{s-}}\int_A \mathcal{M}_b(\dd s,\dd v,\dd h)-\pi_b(A)(\cdot\land \rho)$ is a $\GG$-martingale that is stopped at $\rho$ with terminal value $\int_0^{\zeta}\int_0^{Y_{s-}}\int_A \mathcal{M}_b(\dd s,\dd v,\dd h)-\pi_b(A)\rho=:\tilde{\mathcal{N}}_b([0,\rho]\times A)-\pi_b(A)\rho$ a.s. on $\{\rho<\infty\}$. Then define the random measure $\tilde{\mathcal{N}}_b$ on $[0,\infty)^2$ unambiguously by specifying further that $\tilde{\mathcal{N}}_b([0,u]\times A)=\tilde{\mathcal{N}}_b([0,\rho]\times A)+N((\rho,u]\times A)$ for $u\in [\rho,\infty)$ and $A\in \mathcal{B}_{[0,\infty)}$. We see that for each  Borel set $A$ of $[0,\infty)$ of finite $\pi_b$-measure the process $\tilde{\mathcal{N}}_b([0,\cdot]\times A)-\pi_b(A)\cdot$ is a $\GG$-martingale. It follows from the martingale characterization   of Poisson point processes \cite[Theorem~II.6.2]{ikeda1989stochastic} that $\mathcal{N}_b(\dd w,\dd h)$ is a $\GG$-Poisson random measure with intensity $\pi_b(\dd h)\dd w$.

Further, it is well-known that in a common filtration a Poisson point process and a Brownian motion are automatically independent \cite[Theorem~II.6.3]{ikeda1989stochastic}, not only that, they have jointly independent increments in said filtration, which is in fact what is proved in the quoted theorem, cf. \cite[Eq.~(II.6.12)]{ikeda1989stochastic}. Therefore, setting  (as usual a bar indicates the compensated measure)
$$L_u:=\sigma_b\tilde W_u+\gamma_bu+\int_{[0,u]\times [0,1]}\bar{\tilde{\mathcal{N}}}_b(\dd w,\dd h)+\int_{[0,u]\times (1,\infty)}{\tilde{\mathcal{N}}}_b(\dd w,\dd h),\quad u\in [0,\infty),\text{ a.s.},$$
we get a $\GG$-L\'evy process $L$ having no negative jumps with Laplace exponent $\Psi_b$. Moreover, from \eqref{CBIcullingSDE} we obtain
exactly \eqref{eq:time-change}. The fact that $L$ is a L\'evy process in the filtration $\GG$ gives \ref{proposition:lamperti-one:a}, just because $\GG$ contains the augmented natural filtration of the pair of processes $({L},\gamma^{-1})$ initially enlarged by $\sigma({X})$. On the other hand, $X$ has independent increments relative to $\FF$ initially enlarged by $\sigma(H,N)$ (and augmented), which contains the natural filtration of $X$ but also that of $({L},\gamma^{-1})$ time-changed by $\gamma$, since the latter is contained in $ (\FF_{\gamma^{-1}}\lor \sigma(H,N))_\gamma=(\FF_{\gamma^{-1}})_\gamma\lor \sigma(H,N)=\FF_{\cdot\land \tau_0\land\zeta}\lor \sigma(H,N)\subset \FF\lor \sigma(H,N)$ (for the second equality see \cite[Exercise~1.12]{MR1725357}, the first follows easily because $\sigma(H,N)$ is independent of $\FF_\infty$).
\end{proof}
Some historical comments on the preceding. When, ceteris paribus, $X=0$, the time-change delineated above is originally due to Lamperti \cite{MR0208685}, which explains the name ``Lamperti transform''; see also \cite{10.1214/09-PS154}. In fact, the Lamperti transform for CSBP works also in the other direction, constructing $Y$ from $L$.  \cite{10.1214/12-AOP766} generalizes the transform to CBIs (without stopping on hitting zero), albeit only in the latter direction,  starting from the pair $(X,L)$ to obtain $Y$. \cite{MA201511} handles the Lamperti transform of CSBP with competition (both ways) in the SDE setting;  many of the ideas of the proofs of Theorem~\ref{prop:cbic-sde-method} and of Proposition~\ref{proposition:lamperti-one} are from this source. 
 
It appears that obtaining the converse to Proposition~\ref{proposition:lamperti-one} is more involved for CBMs, vis-\`a-vis CBIs or CSBPs with competition, and this is left as an open problem. The main difficulty lies in establishing that \eqref{eq:time-change} has a unique solution for $Y$ given $(X,L)$, in the appropriate precise sense (if indeed it can be made precise in any reasonable way). Note that, on the one hand, the approach of \cite[p.~1603, proof of Theorem~1, uniqueness, esp. last display]{10.1214/12-AOP766} for CBIs relies heavily on the monotonicity of $X$, which is absent in the setting of CBMs. On the other hand, the line of attack of \cite[proof of Theorem~2.2]{MA201511} (to relate uniqueness of \eqref{eq:time-change} to the uniqueness of \eqref{CBIcullingSDE}) is hindered by the fact that, roughly speaking, one has to work simultaneously with the filtrations of $X$ and $L$ which ``run on different time-scales'' (it is not enough to just shift between time-changed filtrations, cf. Items~\ref{proposition:lamperti-one:a} and~\ref{proposition:lamperti-one:b} of Proposition~\ref{proposition:lamperti-one}).

We might also mention that a further Lamperti-sytle transform of a CBM process leads to ``CSBPs with collisions'', cf. e.g. \cite[Section~2, the process $R$ when $g$ is quadratic]{leman2019extinction} for a special case, however this connection will not be explored here. 

\begin{corollary}\label{corollary:explosivity-I}
Assume the CSBP with branching mechanism $\Psi_b$ is non-explosive. Then $\PP(\zeta<\infty)=0$, i.e. we have non-explosivity of the CBM as well.
\end{corollary}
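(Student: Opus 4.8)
The plan is to fix an arbitrary deterministic $T\in(0,\infty)$ and show $\PP(\zeta\le T)=0$; letting $T\uparrow\infty$ then yields $\PP(\zeta<\infty)=0$. I would work on the event $\{\zeta\le T\}$ and exploit the Lamperti representation \eqref{eq:time-change}. On $\{0<\zeta<\infty\}$ property~\ref{prop:cbic-sde-method:iv} gives $\lim_{\zeta-}Y=\infty$; since $0$ is absorbing this forces $Y>0$ throughout $[0,\zeta)$ and $\tau_0\ge\zeta$, so that on $[0,\zeta)$ we have $X_{t\land\tau_0}=X_t$ and \eqref{eq:time-change} reads $Y_t=X_t+L_{\gamma_t}$, where $\gamma_t:=\int_0^tY_s\dd s$ is strictly increasing and continuous on $[0,\zeta)$. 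Here $L$ is the spectrally positive L\'evy process of Proposition~\ref{proposition:lamperti-one} with Laplace exponent $\Psi_b$, independent of $X$ by~\ref{proposition:lamperti-one:a}.

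First I would argue that $\rho:=\int_0^\zeta Y_s\dd s=\infty$ a.s.\ on $\{\zeta<\infty\}$. Indeed, were $\rho<\infty$, then letting $u\uparrow\rho$ along the inverse clock $\gamma^{-1}$ we would have $Y_{\gamma^{-1}(u)}=X_{\gamma^{-1}(u)}+L_u$, whose right-hand side tends to the finite quantity $X_{\zeta-}+L_{\rho-}$ (neither L\'evy process explodes at a finite time), contradicting $Y\to\infty$. With $\rho=\infty$ the inverse clock $\gamma^{-1}\colon[0,\infty)\to[0,\zeta)$ is a genuine continuous increasing bijection, and the substitution $u=\gamma_t$ gives
\[
\zeta=\int_0^\infty\frac{\dd u}{Y_{\gamma^{-1}(u)}}=\int_0^\infty\frac{\dd u}{X_{\gamma^{-1}(u)}+L_u}.
\]

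The comparison step is where the hypothesis enters. On $\{\zeta\le T\}$ one has $\gamma^{-1}(u)<\zeta\le T$ for every $u$, so, putting $C_T:=\sup_{s<T}|X_s|$ — which is finite a.s.\ and, crucially, $\sigma(X)$-measurable, hence independent of $L$ — we get $-C_T\le X_{\gamma^{-1}(u)}\le C_T$ and therefore $0<X_{\gamma^{-1}(u)}+L_u\le C_T+L_u$ for all $u$. Monotonicity of the reciprocal then yields $\int_0^\infty\frac{\dd u}{C_T+L_u}\le\zeta\le T<\infty$, while $L_u=Y_{\gamma^{-1}(u)}-X_{\gamma^{-1}(u)}\to\infty$ since $X_{\gamma^{-1}(\cdot)}$ stays bounded. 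These are exactly the conditions under which the Lamperti time change of $L$ started from $C_T$ — a CSBP with branching mechanism $\Psi_b$, whose explosion time started from $c$ is precisely $\int_0^\infty\frac{\dd u}{c+L_u}$ on $\{c+L>0,\ L\to\infty\}$ — explodes. I would record this as the inclusion, up to a null set,
\[
\{\zeta\le T\}\subseteq\Bigl\{C_T+L_u>0\ \forall u,\ \int_0^\infty\tfrac{\dd u}{C_T+L_u}<\infty,\ L_u\to\infty\Bigr\}.
\]
Conditioning on the value of $C_T$ and using its independence from $L$, the probability of the right-hand event equals $\int\PP(\text{CSBP from }c\text{ explodes})\,\Law(C_T)(\dd c)$, which vanishes by the non-explosivity assumption; hence $\PP(\zeta\le T)=0$.

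The main obstacle, and the point requiring most care, is the clean translation between explosion of the CBM and explosion of the auxiliary CSBP driven by $L$: I must check that the event realized on $\{\zeta\le T\}$ (positivity of $C_T+L$, convergence of the Lamperti integral, and $L_u\to\infty$) coincides exactly with the explosion event of the CSBP started from the \emph{deterministic} level $c$, so that the independence $L\perp X$ legitimately reduces $\PP(\zeta\le T)$ to an integral of zero explosion probabilities. The subsidiary claim $\rho=\infty$ and the change-of-variables identity for $\zeta$ are routine but must precede the comparison, since they are what convert CBM explosion into a statement purely about the integral $\int^\infty\dd u/(C_T+L_u)$.
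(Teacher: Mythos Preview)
Your argument is correct and rests on the same coupling idea as the paper: on the explosion event, dominate $X$ by a constant and compare with the CSBP driven by the same $L$ started from that constant. The paper implements this pathwise---it defines $\tilde Y$ as the solution of $\tilde Y_t=m+L_{\int_0^t\tilde Y_s\dd s}$ for a fixed $m>\overline X_\zeta$ and shows, via a nested contradiction comparing $\int_0^\cdot Y$ with $\int_0^\cdot\tilde Y$, that $\tilde\zeta\le\zeta$. You instead pass through the change of variables $\zeta=\int_0^\infty\dd u/(X_{\gamma^{-1}(u)}+L_u)$, bound the integrand pointwise by $1/(C_T+L_u)$, and then use the independence $C_T\perp L$ from Proposition~\ref{proposition:lamperti-one}\ref{proposition:lamperti-one:a} to identify the right-hand event, conditionally on $C_T=c$, with the Lamperti explosion event of the CSBP from $c$. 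Both routes arrive at the same comparison; yours is a little more direct (no contradiction within a contradiction) and makes the role of the independence of $L$ and $X$ explicit, while the paper's version stays closer to the process-level picture and never needs to write down the integral formula for $\zeta$.
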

This result will be refined to an equivalence in Corollary~\ref{corollary:explosivity-refined} (under assumption \eqref{eq:non-degnerate}).
\begin{proof}
Coupling argument. Suppose per absurdum that $\PP(\zeta<\infty)>0$. We may and do assume that $\mu=\delta_z$ for some $z\in (0,\infty)$. Let $\overline{X}=(\overline{X}_t)_{t\in[0,\infty)}$ be the running supremum of $X$: $\overline{X}_t:=\sup_{s\in[0,t]}X_s$ for $t\in [0,\infty)$. By continuity from below $\PP(\zeta<\infty,\overline{X}_\zeta< m)>0$ for some $m\in (z,\infty)$. Let $\tilde{Y}$ be the solution to \eqref{eq:time-change} with, ceteris paribus, $X\equiv m$ (for a constant $X$ we know that \eqref{eq:time-change} has an a.s. unique solution). So $\tilde{Y}$ is a CSBP with starting point $m$ and branching mechanism $\Psi_b$. All its corresponding quantities get a $\tilde{}$. We claim that $\tilde\zeta\leq \zeta$ a.s. on $\{\zeta<\infty,\overline{X}_\zeta< m\}$ (which implies  that $\tilde\zeta<\infty$ with positive probability, contradicting the non-explosivity of $\tilde{Y}$ and completing the proof). Suppose $\zeta<\tilde\zeta$ with positive probability on $\{\zeta<\infty,\overline{X}_\zeta< m\}$. Then we cannot have $\int_0^t Y_s\dd s\leq \int_0^t \tilde{Y}_s\dd s$ for all $t\in [0,\zeta)$ a.s. on $\{\zeta<\tilde\zeta,\overline{X}_\zeta< m\}$, since if it is true, letting $t\uparrow \zeta$ yields (by the Lamperti transform for $Y$) $\infty=\int_0^\zeta \tilde{Y}_s\dd s$ a.s. on the event $\{\zeta<\tilde\zeta,\overline{X}_\zeta< m\}$ of positive probability, contradicting the local boundedness of $\tilde{Y}$ on $[0,\tilde\zeta)$. So with positive probability on  $\{\zeta<\tilde\zeta,\overline{X}_\zeta< m\}$, $\int_0^t Y_s\dd s> \int_0^t \tilde{Y}_s\dd s$ for some $t\in [0,\zeta)$. Let $\delta:=\inf\{t\in [0,\zeta\land \tilde{\zeta}):\int_0^t Y_s\dd s> \int_0^t \tilde{Y}_s\dd s\}$. A.s. on $\{\delta<\zeta<\tilde{\zeta},\overline{X}_\zeta< m\}$, an event of positive probability, we have by the Lamperti transform
$$Y_\delta=X_\delta+L_{\int_0^\delta  Y_s\dd s}<m+L_{\int_0^\delta  Y_s\dd s}=m+L_{\int_0^\delta  \tilde Y_s\dd s}=\tilde{Y}_\delta$$
contradicting the fact that $\int_0^\cdot Y_s\dd s>\int_0^\cdot\tilde Y_s\dd s$ immediately after $\delta$.
\end{proof}

Let us conclude this section by emphasizing, at least on an informal level, the fundamental difference between CBI and CBM processes. The former are such that for independent $X_1$, $X_2$, $L_1$, $L_2$, $L$, with $L_1$ and $L_2$ having the same law as $L$, the process associated (via the Lamperti transform) to the pair $(X_1+X_2,L)$ has the same law as the sum of the processes associated to $(X_1,L_1)$ and $(X_2,L_2)$. Put more succinctly, CBIs can be superposed. This property fails for CBMs. Analytically it is a manifestation of the ``affine'' property of the Laplace transform of a CBI process \cite[Eq.~(1.1)]{KAW}, which cannot hold for the CBM class  \cite[Theorem~1.1]{KAW}. In this connection, one should emphasize that the exponential functions $e_\alpha:=e^{-\alpha\cdot}$, $\alpha\in (0,\infty)$, do not actually fall under Theorem~\ref{prop:cbic-sde-method}\ref{prop:cbic-sde-method:mtg} (not even the local martingale part, just because $\LL^{\Psi_m}e_\alpha(0)$ need not be $0$), at least not generically, so even though as a matter of analytical fact $\mathcal{A}_z e_\alpha(z)=\Psi_m(\alpha)e_\alpha(z)+z\Psi_b(\alpha)e_\alpha(z)=(\Psi_m(\alpha)-\Psi_b(\alpha)\frac{\partial}{\partial\alpha})e_\alpha(z)=:\mathcal{B}_\alpha e_\alpha(z)$, and even if $\Psi_m\geq 0$ (so that $\Psi_m$ may be interpreted as the instantaneous rate of killing in the operator $\mathcal{B}$), one is not able to simply ``integrate'' this duality on the level of (sic) the generators to a Laplace duality on the level of the semigroups  \cite[Proposition~1.2]{duality} (as is the case for CBI). Nevertheless:

\begin{proposition}\label{proposition:cbms-superposition}
Suppose two CBMs $Y^1$ and $Y^2$ have been prepared according to \eqref{CBIcullingSDE} using independent Brownian and Poisson drivers in the common filtration $\FF$ under a common probability $\PP$. All the quantities pertaining to $Y^i$ get a superscript $i$, $i\in \{1,2\}$. Suppose $\Psi_b^1=\Psi_b^2=:\Psi_b$. Then there exists (still on the same probability space, in the same filtration) a CBM process $Y=(Y_t)_{t\in [0,\zeta)}$ with initial distribution $\mu:=\mu^1\star\mu^2$, branching mechanism $\Psi_b$, migration mechanism $\Psi_m:=\Psi_m^1+\Psi_m^2$, such that, a.s.-$\PP$,  $\zeta\land \tau_0^1\land\tau_0^2= \zeta^1\land \zeta^2\land \tau_0^1\land\tau_0^2$ and $Y=Y^1+Y^2$ on $[0,\tau_0^1\land\tau_0^2\land \zeta)$. If $\Psi_m^2=0$ then we may further insist that, on the event $\{\tau_0^2<\tau_0^1\land \zeta\}$,  a.s.-$\PP$, $\zeta=\zeta^1$ and $Y=Y^1$ on $[\tau_0^2,\zeta)$.
\end{proposition}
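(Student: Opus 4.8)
The plan is to set $Y:=Y^1+Y^2$ and exhibit it, up to the relevant time, as a solution of \eqref{CBIcullingSDE} driven by a single family of objects assembled out of the drivers of $Y^1$ and $Y^2$; Theorem~\ref{prop:cbic-sde-method} will then identify it with a bona fide CBM of the announced parameters and, through the uniqueness part, pin down the lifetime. Throughout, $\FF$ is the common filtration, in which each original driver is a Poisson random measure / Brownian motion with jointly independent increments. The migration ingredient is routine: since $X^1,X^2$ are independent L\'evy processes with exponents $\Psi_m^1,\Psi_m^2$, the process $X:=X^1+X^2$ is L\'evy with exponent $\Psi_m=\Psi_m^1+\Psi_m^2$ and initial law $\mu=\mu^1\star\mu^2$; its drivers are $\mathcal{N}_m:=\mathcal{N}_m^1+\mathcal{N}_m^2$ (intensity $\dd s\,\pi_m(\dd h)$, $\pi_m=\pi_m^1+\pi_m^2$), $\gamma_m=\gamma_m^1+\gamma_m^2$, and, when $\sigma_m^2:=(\sigma_m^1)^2+(\sigma_m^2)^2>0$, a Brownian motion $B$ defined by $\sigma_m B:=\sigma_m^1 B^1+\sigma_m^2 B^2$ and L\'evy's characterization (the quadratic variation being $\sigma_m^2 t$).

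The branching Brownian term combines in the same spirit: on $\{Y>0\}$ I would set $\dd W:=(\sigma_b\sqrt{Y})^{-1}(\sigma_b\sqrt{Y^1}\,\dd W^1+\sigma_b\sqrt{Y^2}\,\dd W^2)$, a continuous $\FF$-local martingale of quadratic variation $t$, hence an $\FF$-Brownian motion. The branching jumps are the delicate step and, I expect, the main obstacle. One combines $\mathcal{M}_b^1$ and $\mathcal{M}_b^2$ by a predictable, fibrewise Lebesgue-preserving rearrangement of the population coordinate $v$: at time $s$ the atoms of $\mathcal{M}_b^1$ are carried by $v\mapsto v$ on $[0,Y^1_{s-}]$ and by $v\mapsto v+Y^2_{s-}$ on $(Y^1_{s-},\infty)$, while the atoms of $\mathcal{M}_b^2$ lying in $[0,Y^2_{s-}]$ are carried by $v\mapsto v+Y^1_{s-}$ (the remaining atoms of $\mathcal{M}_b^2$ being discarded, which is exactly the point where the ``two planes into one'' mass mismatch is resolved). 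The images tile $[0,\infty)$, and by this change of variables the resulting simple point process $\mathcal{M}_b$ satisfies, for $A\in\{[0,1],(1,\infty)\}$,
\[
\int_{(0,t]\times[0,Y_{s-}]\times A}\!\!\!\! h\,\mathcal{M}_b(\dd s,\dd v,\dd h)=\int_{(0,t]\times[0,Y^1_{s-}]\times A}\!\!\!\! h\,\mathcal{M}_b^1(\dd s,\dd v,\dd h)+\int_{(0,t]\times[0,Y^2_{s-}]\times A}\!\!\!\! h\,\mathcal{M}_b^2(\dd s,\dd v,\dd h)
\]
(and likewise for the compensated $\bar{\mathcal{M}}_b$). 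Because the rearrangement is predictable and Lebesgue-preserving on each fibre, the $\FF$-compensator of $\mathcal{M}_b$ works out to $\dd s\,\dd v\,\pi_b(\dd h)$, so $\mathcal{M}_b$ is an $\FF$-Poisson random measure with this intensity by the martingale characterization \cite[Theorem~II.6.2]{ikeda1989stochastic}, exactly as $\tilde{\mathcal{N}}_b$ is handled in the proof of Proposition~\ref{proposition:lamperti-one}. Having realized all four objects $(\mathcal{M}_b,W,B,\mathcal{N}_m)$ as Poisson random measures / Brownian motions in the single filtration $\FF$, their joint independence and joint independence of increments come for free from \cite[Theorem~II.6.3, Eq.~(II.6.12)]{ikeda1989stochastic}. (These drivers are defined on $[0,\zeta^1\land\zeta^2)$; I would extend them beyond by independent copies, an innocuous enlargement not affecting $Y$ before its lifetime.)

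With these drivers in hand, on $[0,\tau_0^1\land\tau_0^2)$ — where $Y^1,Y^2>0$ and neither $X^i$ has been frozen, so $X^i_{t\land\tau_0^i}=X^i_t$ — adding the two instances of \eqref{CBIcullingSDE} and invoking the three combining identities shows that $Y=Y^1+Y^2$ solves \eqref{CBIcullingSDE} with the assembled drivers, branching mechanism $\Psi_b$ and migration mechanism $\Psi_m$. Theorem~\ref{prop:cbic-sde-method} then supplies the CBM attached to $(\mu,\Psi_b,\Psi_m)$ driven by $(\mathcal{M}_b,W,B,\mathcal{N}_m)$, and its pathwise uniqueness forces that CBM to coincide with $Y^1+Y^2$ on $[0,\tau_0^1\land\tau_0^2\land\zeta)$. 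For the lifetime relation, note that on $[0,\tau_0^1\land\tau_0^2\land\zeta)$ the sum $Y^1+Y^2$ is finite, whereas on $\{\zeta^1\land\zeta^2<\tau_0^1\land\tau_0^2\}$ one summand explodes while both remain positive, so $Y^1+Y^2\uparrow\infty$ precisely at $\zeta^1\land\zeta^2$; together these give $\zeta\land\tau_0^1\land\tau_0^2=\zeta^1\land\zeta^2\land\tau_0^1\land\tau_0^2$.

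Finally, for the refinement, suppose $\Psi_m^2=0$, so $X^2\equiv X_0^2$ is constant and $Y^2$ is a CSBP absorbed at $0$; on $\{\tau_0^2<\tau_0^1\land\zeta\}$ the summand $Y^2$ then vanishes identically after $\tau_0^2$. There $Y_{s-}=Y^1_{s-}$, and the combining identities degenerate: $\dd W=\dd W^1$, $\mathcal{M}_b$ agrees with $\mathcal{M}_b^1$ below the curve $Y^1_{s-}$, and the increments of $X=X^1+X_0^2$ coincide with those of $X^1$. Hence, started from $Y_{\tau_0^2}=Y^1_{\tau_0^2}$, the restriction of $Y$ to $[\tau_0^2,\zeta)$ solves the same equation as $Y^1$ with the same drivers, and one last appeal to pathwise uniqueness yields $Y=Y^1$ on $[\tau_0^2,\zeta)$ together with $\zeta=\zeta^1$. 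I expect the genuinely nontrivial work to be confined to the predictable rearrangement of the branching measure and the verification that it is an $\FF$-Poisson random measure; everything else is bookkeeping on top of Theorem~\ref{prop:cbic-sde-method}.
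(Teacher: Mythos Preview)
Your proposal is correct and follows essentially the same route as the paper: combine the migration drivers additively, merge the branching Brownian motions via $(Y^1+Y^2)^{-1/2}(\sqrt{Y^1}\,\dd W^1+\sqrt{Y^2}\,\dd W^2)$, splice the branching Poisson measures by a predictable Lebesgue-preserving rearrangement of the $v$-coordinate, verify the Poisson property via \cite[Theorem~II.6.2]{ikeda1989stochastic} and joint independence via \cite[Theorem~II.6.3]{ikeda1989stochastic}, then invoke the uniqueness of Theorem~\ref{prop:cbic-sde-method}. The only notable difference is in the splicing of $\mathcal{M}_b$: the paper uses a two-piece split depending on $Y^1_{s-}$ alone (keep $\mathcal{M}_b^1$ on $\{v<Y^1_{s-}\}$, shift all of $\mathcal{M}_b^2$ by $Y^1_{s-}$, with the convention $Y^1=\infty$ after $\zeta^1$), whereas you use a three-piece split involving both $Y^1_{s-}$ and $Y^2_{s-}$; the paper's choice has the minor advantage of being globally defined without a separate extension step, but both are valid.
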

\begin{proof}
Define  the random measure $\mathcal{M}_b(A):=\int_A \mathbbm{1}_{[0,Y_{s-}^1)}(v)\mathcal{M}_b^1(\dd s,\dd v,\dd h)+\int_A \mathbbm{1}_{[Y_{s-}^1,\infty)}(v)\mathbbm{1}_A(s,v-Y_{s-}^1,h)\mathcal{M}_b^2(\dd s,\dd v,\dd h)$, $A\in \mathcal{B}_{[0,\infty)^3}$, where we understand $Y^1=\infty$ on $[\zeta^1,\infty)$. Then \cite[Theorem~II.6.2]{ikeda1989stochastic} $\mathcal{M}_b(\dd s,\dd v,\dd h)$ is an $\FF$-Poisson random measure with intensity $\dd s\dd v\pi_b(\dd h)$. More trivially, $\mathcal{N}_m:=\mathcal{N}^1_m+\mathcal{N}^2_m$ is an $\FF$-Poisson random measure and the intensity of $\mathcal{N}_m(\dd s,\dd h)$ is $\dd s\pi_m(\dd h)$ with $\pi_m:=\pi_m^1+\pi_m^2$. Besides, $\PP$-a.s. $\mathcal{M}_b$ has no jumps in common with $\mathcal{N}_m$. 

Similarly, the process $W:=(Y^1+Y^2)^{-1/2}\cdot (\sqrt{Y^1}\cdot W^1+\sqrt{Y^2}\cdot W^2)$ defined on $[0,\zeta^1\land \zeta^2\land \tau_0^1\land \tau_0^2)$ and extended by the increments of $W^1$ thereafter, is a standard $\FF$-Brownian motion.  Again more trivially, $B:=((\sigma_m^1)^2+(\sigma_m^2)^2)^{-1}(\sigma_m^1 B^1+\sigma_m^2 B^2)$ (or just $B:=B^1$, say, if $\sigma_m^1=\sigma_m^2=0$) is a standard $\FF$-Brownian motion.  In addition, $\PP$-a.s. the covariation process of $W$ and $B$ vanishes.

From the preceding it follows \cite[Theorem~II.6.3]{ikeda1989stochastic} that the processes $\mathcal{M}_b$, $\mathcal{N}_m$, $B$ and $W$ are independent. Let $Y$ be the CBM corresponding to the initial value $X_0:=X_0^1+X_0^2$ and these drivers according to  \eqref{CBIcullingSDE}. Its branching mechanism is  $\Psi_b$, its migration mechanism is $\Psi_m$ and its initial value is $\mu$. Taking the sum of \eqref{CBIcullingSDE} corresponding to $Y^1$ and $Y^2$ we get the remainder of the claim (by the uniqueness of $Y$ as a solution to  \eqref{CBIcullingSDE}). 
\end{proof}

\begin{corollary}
CBMs are stochastically monotone in the starting law, with the (natural) convention that the coffin state is set equal to $\infty$, $[0,\infty]$ having the usual order: if $\mu'$ is another law on $\mathcal{B}_{[0,\infty)}$ with $\mu\leq \mu'$ (in first-order stochastic dominance) then $\mathbb{P}_\mu[G]\leq \mathbb{P}_{\mu'}[G]$ for any measurable map $G:[0,\infty]^{[0,\infty)}\to [-\infty,\infty]$ satisfying $\left(\omega\leq \omega'\Rightarrow G(\omega)\leq G(\omega')\right)$ for   $\{\omega,\omega'\}\subset [0,\infty]^{[0,\infty)}$ ($G$ is $\uparrow$ relative to the natural partial order induced on $[0,\infty]^{[0,\infty)}$ by the linear order $\leq$ on $[0,\infty]$) and $\PP_\mu[G^-]<\infty$.
\end{corollary}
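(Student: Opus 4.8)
The plan is to realize the stochastic ordering by a monotone coupling and then exploit monotonicity of $G$. First I would reduce the statement to the construction, on a single probability space, of two CBMs $Y\sim\PP_\mu$ and $Y'\sim\PP_{\mu'}$ (both regarded as $[0,\infty]$-valued paths, sent to the cemetery $\infty$ after their respective lifetimes $\zeta,\zeta'$) with $Y_t\le Y_t'$ for all $t\in[0,\infty)$, a.s. Granting such a coupling, monotonicity of $G$ gives $G(Y)\le G(Y')$ a.s.; since then $G(Y')^-\le G(Y)^-$, we get $\PP_{\mu'}[G^-]=\PP[G(Y')^-]\le\PP[G(Y)^-]=\PP_\mu[G^-]<\infty$, so that both $\PP_\mu[G]=\PP[G(Y)]$ and $\PP_{\mu'}[G]=\PP[G(Y')]$ are well defined in $(-\infty,\infty]$, and taking expectations of $G(Y)\le G(Y')$ yields $\PP_\mu[G]\le\PP_{\mu'}[G]$, as claimed.

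To build the coupling I would drive \eqref{CBIcullingSDE} for both processes with one and the same family of independent noises $(\mathcal M_b,W,B,\mathcal N_m)$, differing only in the initial value. Because $\mu\le\mu'$ in first-order stochastic dominance, the quantile (Strassen) coupling $X_0:=F_\mu^{-1}(U)$, $X_0':=F_{\mu'}^{-1}(U)$, with $U$ a uniform random variable taken $\FF_0$-measurable and independent of the noise, produces $\FF_0$-measurable initial values with the prescribed marginals and $X_0\le X_0'$ a.s. I would then pass through the truncated approximations $Y^n,(Y')^n$ of \eqref{eq:sde-with-n} from the existence part of Theorem~\ref{prop:cbic-sde-method}: these are genuine global (lifetime-free) strong solutions of \emph{the same} equation, distinguished only by $Y^n_0=X_0\le X_0'=(Y')^n_0$.

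The crux is the pathwise comparison $Y^n\le(Y')^n$, which I would read off as monotonicity of the stochastic flow of \eqref{eq:sde-with-n} in its initial condition. The relevant ingredients are three: the two solutions share the same globally defined coefficients and the same driving noise; the jump maps $x\mapsto x+g_0(x,u_0)$ and $x\mapsto x+g_1(x,u_1)$ are nondecreasing in $x$ for each fixed $u_0,u_1$ — immediate from the indicator structure $g_\cdot(x,(v,h))=\mathbbm{1}_{[v,\infty)}(x)h$ with $h\ge0$, since for $x\le x'$, whichever of the two indicators fires, the post-jump values remain ordered as $h\ge0$ — and the associated compensator corrections share this monotonicity; finally the branching diffusion coefficient $x\mapsto\sigma_b\sqrt{(x\vee0)\wedge n}$ is $1/2$-Hölder while the drift is Lipschitz. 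These are exactly the hypotheses of the comparison theorem for this class of stochastic equations, so $Y^n\le(Y')^n$ a.s.\ follows; I would either invoke it directly (it is available within the framework of \cite{li-pu}, cf.\ also \cite{MA201511}) or reprove it by the Yamada–Watanabe device, applying It\^o's formula to $\phi_k(Y^n-(Y')^n)$ for smooth convex $\phi_k\uparrow(\cdot)^+$, checking that the diffusion contribution vanishes along the approximation (by $1/2$-Hölderness), that the order-preserving jumps contribute nonpositively on $\{Y^n>(Y')^n\}$, and closing by Gr\"onwall. \textbf{I expect this comparison step to be the main obstacle}, precisely because of the interplay of the non-Lipschitz ($1/2$-Hölder) branching diffusion coefficient with the jump terms.

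Finally I would pass to the limit and reconcile the lifetimes. Writing $\zeta^n,(\zeta')^n$ for the exit times of the stopped truncations as in the existence proof, the pathwise ordering $Y^n\le(Y')^n$ together with downward skip-freeness and absorption at $0$ forces the larger process to up-cross the level $n$ no later than the smaller one, so $(\zeta')^n\le\zeta^n$, whence $\zeta'=\lim_n(\zeta')^n\le\lim_n\zeta^n=\zeta$. On $[0,\zeta')$ the identities $Y=\lim_nY^n$ and $Y'=\lim_n(Y')^n$ give $Y\le Y'$; on $[\zeta',\zeta)$ we have $Y'=\infty\ge Y$ by the cemetery convention; and on $[\zeta,\infty)$ both equal $\infty$. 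Hence $Y_t\le Y_t'$ for all $t$ a.s., which supplies the required coupling and thereby completes the proof.
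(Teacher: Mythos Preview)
Your argument is sound in outline but follows a genuinely different route from the paper. You couple by feeding the \emph{same} noise $(\mathcal M_b,W,B,\mathcal N_m)$ into \eqref{CBIcullingSDE} with two ordered initial values and then invoke (or reprove) a pathwise comparison theorem for the truncated equations \eqref{eq:sde-with-n}. The paper instead couples via Proposition~\ref{proposition:cbms-superposition}: it realizes the larger initial value as $X_0^1+X_0^2$ with $X_0^2\ge 0$, runs the CBM $Y^1$ (migration $\Psi_m$, start $X_0^1$) and an \emph{independent} pure CSBP $Y^2$ ($\Psi_m^2=0$, start $X_0^2$) from independent drivers, and then the proposition supplies a CBM $Y$ with the right law satisfying $Y=Y^1+Y^2$ on $[0,\tau_0^1\wedge\tau_0^2\wedge\zeta)$ and $Y=Y^1$ on $[\tau_0^2,\zeta)$ when $\tau_0^2<\tau_0^1\wedge\zeta$; the pathwise ordering $Y\ge Y^1$ is then immediate from $Y^2\ge 0$ and absorption at $0$. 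The trade-off: your approach is the natural ``same-noise'' comparison, but it outsources the real work to a comparison theorem that is not proved in the paper and whose exact availability in \cite{li-pu} or \cite{MA201511} (with the two Brownian inputs and this jump structure) you would still need to pin down or carry out the Yamada--Watanabe computation in full. The paper's route is entirely self-contained once Proposition~\ref{proposition:cbms-superposition} is in hand, replacing a comparison estimate by the simpler \emph{additive} identity $Y=Y^1+Y^2$.
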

\begin{proof}
Coupling. In the context of Proposition~\ref{proposition:cbms-superposition} take $X^2$ constant and equal to $X^2_0$ (so $\Psi^2_m=0$). $Y$ and $Y^1$ have the same branching and migration mechanisms but $Y$ starts above $Y^1$ a.s.-$\PP$. Also,  $\zeta\land \tau_0^1\land\tau_0^2= \zeta^1\land \zeta^2\land \tau_0^1\land\tau_0^2$ and $Y=Y^1+Y^2$ on $[0,\tau_0^1\land\tau_0^2\land \zeta)$; on the event $\{\tau_0^2<\tau_0^1\land \zeta\}$,  a.s.-$\PP$, $\zeta=\zeta^1$ and $Y=Y^1$ on $[\tau_0^2,\zeta)$. It follows that $Y\geq Y^1$ everywhere a.s.-$\PP$ [with the convention that the coffin state is set equal to $\infty$, $[0,\infty]$ having the usual order].
\end{proof}
\section{First passage times and explosions}\label{sec:first-explosion}
To avoid the analysis of some special cases, which are perhaps not of so much interest in the present context, we assume for the remainder of this text that (in the notation of the Lamperti transform, Proposition~\ref{proposition:lamperti-one}) 
\begin{center}
\fbox{\parbox{0.75\textwidth}{
\begin{equation}\label{eq:non-degnerate}
\text{neither $X$ nor $L$ have a.s. nondecreasing paths.}
\end{equation} 
}}
\end{center}
We call such CBMs non-degenerate. In other words, $\Psi_b$ and $\Psi_m$ are Laplace exponents of spectrally positive L\'evy processes (SPLPs)  [in the narrow sense] or of strictly negative drifts. Recall that the case when $X$ is a subordinator (resp. the zero process) corresponds to a CBI (resp. a CSBP) process and this has been studied in \cite{Duhalde} (at least as far as first passage times are concerned). So it is only the case when $L$ is a subordinator that is being left (completely) untreated.
 

Let $\xi$ be the canonical process on the space of c\`adl\`ag nonnegative real paths with lifetime; let $\mathsf{l}$ be the lifetime of $\xi$ and set $\sigma_a:=\inf\{t\in [0,\mathsf{l}): \xi_t\leq a\}$ for $a\in [0,\infty)$.  We trust that no confusion can arise when it comes to the notation $\sigma_a$ vis-\`a-vis the diffusion coefficients $\sigma_b$ and $\sigma_m$ (we will never use $b$ and $m$ for the first-passage level). 

Denote by $\Psi_b^{-1}:[0,\infty)\to [0,\infty)$ and $\Psi_m^{-1}:[0,\infty)\to [0,\infty)$ the right-continuous inverses of $\Psi_b$ and $\Psi_m$, respectively: $$\Psi_w^{-1}(t):=\inf\{z\in [0,\infty):\Psi_w(z)> t\},\quad t\in [0,\infty),\, w\in \{b,m\}$$ (finite, due to \eqref{eq:non-degnerate}). In the next theorem the case $\bar\alpha=0$ is mainly of interest, but the inclusion of $\bar\alpha>0$ does not really make the proof any longer or more difficult, and it allows to procure some information on the ``cumulative lifetime-to-date'' process $\int_0^\cdot \xi_s\dd s$.

\begin{theorem}[First passage times of CBMs]\label{entrancetimeCBIculling} Let $\alpha,\bar\alpha$ be from $[0,\infty)$. Suppose $\Psi_m(\Psi_b^{-1}(\bar \alpha))<\alpha$ (i.e.  $\Psi_b^{-1}(\bar \alpha)<\Psi_m^{-1}(\alpha)$ and [$\alpha>0$ or  $\Psi_b^{-1}(\bar \alpha)>0$]), or else suppose that $\Phi:=\Psi_b^{-1}(\bar \alpha)=\Psi_m^{-1}(\alpha)>0$. Put 
\begin{align}\label{eq:Phi}
\Phi_{\alpha,\bar\alpha}(x)&:=\int_{\Psi_b^{-1}(\bar{\alpha})}^\infty \frac{\dd z}{\Psi_b(z)-\bar{\alpha}}\exp\left(-xz-\int_{\Psi_m^{-1}(\alpha)}^z\frac{\Psi_m(u)-\alpha}{\Psi_b(u)-\bar\alpha}\dd u\right),\quad x\in [0,\infty),\\
&\text{or}\nonumber\\
\Phi_{\alpha,\bar\alpha}(x)&:=e^{-\Phi x}, \quad x\in [0,\infty),\nonumber
\end{align} 
 according as to whether $\Psi_m(\Psi_b^{-1}(\bar \alpha))<\alpha$ or $\Psi_b^{-1}(\bar \alpha)=\Psi_m^{-1}(\alpha)>0$.
Then for   $a\leq x$  from $[0,\infty)$,
\begin{equation}
\label{eq:firs-passage.cbic}
\PP_x[e^{-\alpha \sigma_a-\bar{\alpha} \int_{0}^{\sigma_a}\xi_s\dd s };\sigma_a<\mathsf{l}]=\frac{\Phi_{\alpha,\bar\alpha}(x)}{\Phi_{\alpha,\bar\alpha}(a)}.
\end{equation}
\end{theorem}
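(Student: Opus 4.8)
The plan is to find a positive, $C^2$, decreasing function $F:=\Phi_{\alpha,\bar\alpha}$ that is harmonic for $Y$ killed at rate $\alpha+\bar\alpha z$, i.e. that solves $\mathcal{A}F(z)=(\alpha+\bar\alpha z)F(z)$ on $[a,\infty)$, and to feed it into the (local) martingale of Theorem~\ref{prop:cbic-sde-method}\ref{prop:cbic-sde-method:mtg}. Since we start from $x\geq a$ and will stop at $\tau_a$, Remark~\ref{remark:mtg} is tailor-made: once $F$ is known to be $C^2$ and bounded on $[a,\infty)$ (bounded by $F(a)$, as $F$ is $\downarrow$) and to solve that equation there, the stopped process $M^{\tau_a}$, with $M_t:=F(Y_t)e^{-\alpha t-\bar\alpha\int_0^tY_s\dd s}\mathbbm{1}_{\{t<\zeta\}}-F(Y_0)$, is a bounded $(\FF,\PP_x)$-martingale; its drift integral vanishes identically because $\mathcal{A}F-\alpha F-\bar\alpha zF\equiv0$ on $[a,\infty)$ while $Y^{\tau_a}$ stays in $[a,\infty)$. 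Optional stopping then gives $\PP_x[F(Y_{t\wedge\tau_a})e^{-\alpha(t\wedge\tau_a)-\bar\alpha\int_0^{t\wedge\tau_a}Y_s\dd s}\mathbbm{1}_{\{t\wedge\tau_a<\zeta\}}]=F(x)$, and the theorem follows on letting $t\to\infty$.

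The crux is verifying that the explicit $F$ solves the harmonic equation. Writing $F(x)=\int_{\lambda_0}^\infty e^{-xz}g(z)\dd z$ with $\lambda_0:=\Psi_b^{-1}(\bar\alpha)$ and $g(z):=\frac{1}{\Psi_b(z)-\bar\alpha}\exp(-\int_{\Psi_m^{-1}(\alpha)}^z\frac{\Psi_m(u)-\alpha}{\Psi_b(u)-\bar\alpha}\dd u)$, I would start from the elementary identities $\mathcal{L}^{\Psi_b}e_z=\Psi_b(z)e_z$ and $\mathcal{L}^{\Psi_m}e_z=\Psi_m(z)e_z$ for $e_z:=e^{-z\cdot}$, and interchange the generators with the $\dd z$-integral (justified by Fubini, once the integrability of $z\mapsto(1+|\Psi_m(z)|+x\Psi_b(z))e^{-xz}g(z)$ and a bound for the compensated-jump parts are in hand) to obtain $\mathcal{A}F(x)=\int_{\lambda_0}^\infty(\Psi_m(z)+x\Psi_b(z))e^{-xz}g(z)\dd z$. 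Rewriting $xe^{-xz}=-\partial_z e^{-xz}$ and integrating by parts recasts the desired identity $\mathcal{A}F=(\alpha+\bar\alpha z)F$ as $\int_{\lambda_0}^\infty[(\Psi_m(z)-\alpha)g(z)+\partial_z((\Psi_b(z)-\bar\alpha)g(z))]e^{-xz}\dd z=0$, which holds because $g$ was chosen precisely so that $(\Psi_b-\bar\alpha)g$ solves the first-order linear ODE $\partial_z((\Psi_b(z)-\bar\alpha)g(z))=-(\Psi_m(z)-\alpha)g(z)$.

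The main obstacle is controlling the two boundary terms thrown off by this integration by parts, and this is exactly where the hypothesis bites. At $z=\infty$ the term $(\Psi_b(z)-\bar\alpha)g(z)e^{-xz}$ vanishes by the (at most exponential) decay of $g$ against $e^{-xz}$. At the lower endpoint $z\downarrow\lambda_0$, where $\Psi_b-\bar\alpha$ vanishes, the standing assumption $\Psi_m(\lambda_0)<\alpha$ (i.e. $\lambda_0<\Psi_m^{-1}(\alpha)$) makes $\frac{\Psi_m(u)-\alpha}{\Psi_b(u)-\bar\alpha}\sim c/(u-\lambda_0)$ with $c<0$ near $\lambda_0$; hence $(\Psi_b-\bar\alpha)g\sim(z-\lambda_0)^{-c}\to0$ (killing the boundary term) while simultaneously $g(z)\sim(z-\lambda_0)^{-c-1}$ is integrable there (both because $-c>0$). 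The very same estimate, together with the behaviour of $\Psi_m/\Psi_b$ at $\infty$, secures the finiteness of $F$, its smoothness (differentiation under the integral sign), its strict positivity, its monotone decrease to $F(\infty)=0$, and the bound $F\leq F(a)$ on $[a,\infty)$ used above. The degenerate alternative $F(x)=e^{-\Phi x}$, with $\Phi=\Psi_b^{-1}(\bar\alpha)=\Psi_m^{-1}(\alpha)>0$, needs no integration by parts: $\mathcal{A}F(z)=(\Psi_m(\Phi)+z\Psi_b(\Phi))e^{-\Phi z}=(\alpha+\bar\alpha z)F(z)$, since $\Psi_m(\Phi)=\alpha$ and $\Psi_b(\Phi)=\bar\alpha$.

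Finally I would pass to the limit $t\to\infty$ by bounded convergence, the integrand being dominated by the constant $F(a)$. On $\{\tau_a<\zeta\}$ the absence of negative jumps forces $Y_{\tau_a}=a$ (the process cannot undershoot the level $a$, so $Y_{\tau_a-}=Y_{\tau_a}=a$), whence the limit there is $F(a)e^{-\alpha\tau_a-\bar\alpha\int_0^{\tau_a}Y_s\dd s}$. On $\{\tau_a\geq\zeta\}$ the contribution vanishes: when $\zeta<\infty$ the indicator $\mathbbm{1}_{\{t\wedge\tau_a<\zeta\}}$ is eventually $0$ (and in any case $F(Y_t)\to F(\infty)=0$ as $t\uparrow\zeta$, by Theorem~\ref{prop:cbic-sde-method}\ref{prop:cbic-sde-method:iv}), while on $\{\tau_a=\zeta=\infty\}$ the factor $e^{-\alpha t-\bar\alpha\int_0^tY_s\dd s}$ — or else $F(Y_t)$, should $Y_t\to\infty$ — tends to $0$. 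Thus $F(x)=F(a)\,\PP_x[e^{-\alpha\tau_a-\bar\alpha\int_0^{\tau_a}Y_s\dd s};\tau_a<\zeta]$, which after dividing by $F(a)$ and rendering in canonical notation ($\tau_a\leftrightarrow\sigma_a$, $\zeta\leftrightarrow\mathsf{l}$, $Y\leftrightarrow\xi$) is precisely \eqref{eq:firs-passage.cbic}.
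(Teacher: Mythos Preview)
Your approach is precisely the paper's: verify that $\Phi_{\alpha,\bar\alpha}$ solves $\mathcal{A}\Phi_{\alpha,\bar\alpha}=(\alpha+\bar\alpha\,\mathrm{id})\Phi_{\alpha,\bar\alpha}$ on $[a,\infty)$ via the same integration-by-parts computation, feed it into the local martingale of Theorem~\ref{prop:cbic-sde-method}\ref{prop:cbic-sde-method:mtg}\ref{prop:cbic-sde-method:mtg:II} stopped at $\tau_a$ through Remark~\ref{remark:mtg}, and take expectations in the limit. There are, however, two places where your write-up has a genuine gap and where the paper proceeds by preliminary reductions that you omit.

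First, you assert that $F$ is $C^2$ on $[a,\infty)$ for every $a\geq 0$. This can fail at $x=0$: differentiation under the integral sign there requires $\int_{\lambda_0}^\infty z^k g(z)\,\dd z<\infty$, and already for $k=2$ this may diverge (e.g.\ take $\Psi_m$ a pure negative drift and $\Psi_b$ purely Brownian; then $g(z)\asymp z^{-3}$ at infinity). The paper explicitly notes that $\Phi_{\alpha,\bar\alpha}$ is $C^2$ on $(0,\infty)$ but ``maybe or maybe not at zero'', and disposes of the issue by first reducing to $a>0$ via $a\downarrow 0$ (bounded convergence on the left, continuity of $\Phi_{\alpha,\bar\alpha}$ on the right), a reduction that itself relies on having $\alpha>0$.

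Second, your passage to the limit $t\to\infty$ on $\{\tau_a=\zeta=\infty\}$ is not justified in all cases: you say that either $e^{-\alpha t-\bar\alpha\int_0^tY_s\,\dd s}\to 0$ or $F(Y_t)\to 0$, but when $\alpha=\bar\alpha=0$ (allowed by the hypothesis whenever $0<\Psi_b^{-1}(0)<\Psi_m^{-1}(0)$) neither conclusion is forced, since $Y$ need not tend to $\infty$ on this event and you cannot invoke Corollary~\ref{corollary:convergence-of-CBM} without circularity. The paper handles this by first reducing to $\alpha>0$ (monotone/dominated convergence in \eqref{eq:Phi}--\eqref{eq:firs-passage.cbic} as $\alpha\downarrow 0$), after which $e^{-\alpha t}\to 0$ settles the matter trivially. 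With these two reductions inserted, your argument becomes the paper's.
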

Before we proceed to the proof, some comments.
\begin{remark}\label{rmk:any-theta}
Any $\theta\in (\Psi_b^{-1}(\bar \alpha),\infty)$ may replace $\Psi_m^{-1}(\alpha)$ in \eqref{eq:Phi}, it changes it only by a multiplicative constant, which is immaterial. The delimiter $\Psi_m^{-1}(\alpha)$ seems most natural because it precisely separates the area of positivity and negativity of the integrand. Eq.~\eqref{eq:Phi} may be compared with the CBI case \cite[Eq.~(11)]{Duhalde}. It is somewhat agreeable that it actually attains a more ``symmetric'' form when viewed through the lense of Laplace exponents of SPLPs (as opposed to one SPLP and one subordinator).
\end{remark}

\begin{remark}
We see from \eqref{eq:Phi}-\eqref{eq:firs-passage.cbic}, by dominated convergence, that for all $a\in [0,\infty)$, $\lim_{b\to a}\tau_b=\tau_a$ a.s.-$\PP$. This may also be gleaned from the general properties of CBMs as follows. On the one hand, Proposition~\ref{proposition:lamperti-one} and the regularity downwards of SPLPs \cite[p.~232]{Kyprianoubook} imply that $Y$ is also regular downwards at all levels from $(0,\infty)$, which renders  $\text{$\downarrow$-$\lim$}_{b\uparrow a}\tau_b=\tau_a$ a.s.-$\PP$ for all $a\in (0,\infty)$. On the other hand, quasi left-continuity, coupled with the property $\lim_{\zeta-}Y=\infty$ a.s.-$\PP$ on $\{0<\zeta<\infty\}$, yield $\text{$\uparrow$-$\lim$}_{b\downarrow a}\tau_b=\tau_a$ a.s.-$\PP$ for all $a\in [0,\infty)$.
\end{remark}
As  a check:
\begin{example}\label{example:check}
Let $\{\mathsf{b},\mathsf{m}\}\subset (0,\infty)$ and $\Psi_b=\mathsf{b}\cdot\mathrm{id}_{[0,\infty)}$,  $\Psi_m=\mathsf{m}\cdot\mathrm{id}_{[0,\infty)}$. Then for $x\in [0,\infty)$: for  $\alpha\in (0,\infty)$, 
$\Phi_{\alpha,0}(x)=\frac{\Gamma(\frac{\alpha}{\mathsf{b}})}{\mathsf{b}\left(\frac{\alpha}{\mathsf{b}}+\frac{\alpha}{\mathsf{m}}x\right)^{\frac{\alpha}{\mathsf{b}}}},$ hence $\PP_x[e^{-\alpha \sigma_0};\sigma_0<\mathsf{l}]=(1+\frac{\mathsf{b}}{\mathsf{m}}x)^{-\frac{\alpha}{\mathsf{b}}}$; therefore ${\sigma_0}_\star \mathbb{P}_x=\delta_{\mathsf{b}^{-1}\log(1+\frac{\mathsf{b}}{\mathsf{m}}x)}$, as it should be (e.g. from \eqref{eq:time-change}).
\end{example}

Formula \eqref{eq:Phi} above, and \eqref{eq:scale-for-explosion} to follow below, may seem at first sight to appear ``out of the blue''. Of course it is not so. First, they may be guessed from the discrete counterparts \cite[Corollary~4.14, Theorem~4.2]{Vidmar}, using \cite[Remark~4.16]{Vidmar}, as was actually the case. Second, in absence of the former, they could be got by solving the relevant o.d.e. problems (though it still involves ``guessing'' the ``Laplace transform/completely monotone'' forms of  \eqref{eq:Phi} \& \eqref{eq:scale-for-explosion}, cf. the discrete case \cite[p.~9]{Vidmar}). With the luxury of the discrete analogs being available, the first option seems decidedly preferable (or anyway faster/easier). 

\begin{proof}
We focus on the case when $\Psi_m(\Psi_b^{-1}(\bar \alpha))<\alpha$, the version with $\Psi_b^{-1}(\bar \alpha)=\Psi_m^{-1}(\alpha)>0$ is similar (also easier) and is left to the reader. 

First, $\Phi_{\alpha,\bar\alpha}:[0,\infty)\to (0,\infty)$ is well-defined, finite, $\downarrow\downarrow$, continuous and vanishing at infinity, which is easy to see using the assumption $\Psi_m(\Psi_b^{-1}(\bar \alpha))<\alpha$  by recalling that $\Psi_m-\alpha$ is continuous with $\lim_\infty\Psi_m=\infty$, also strictly negative on $(0,\Psi_m^{-1}(\alpha))$ and strictly positive on $(\Psi_m^{-1}(\alpha),\infty)$, with the analogous observation being true for $\Psi_b$. Differentiating under the integral sign we see also that $\Phi_{\alpha,\bar\alpha}$ is $C^2$ (indeed $C^\infty$) on $(0,\infty)$ (maybe or maybe not at zero, we do not need it). 

We let $\mu=\delta_x$ be the initial distribution of the CBM $Y$, thus $\PP_x$ is the $\PP=\PP^\mu$-law of $Y=Y^\mu$. If necessary, taking the limit as $\alpha\downarrow 0$, we see by monotone (or bounded) convergence on the l.h.s. and by monotone and dominated convergence on the r.h.s. of \eqref{eq:firs-passage.cbic}, that we may (and we do) assume $\alpha>0$ (indeed, for $\alpha=0$, one can first replace the delimiter $\Psi_m^{-1}(\alpha)$ with $\Psi_m^{-1}(0)$ in the expression for $\Phi_{\alpha,\bar\alpha}$, then pass to the limit $\alpha\downarrow 0$ by monotone convergence on $z\in (\Psi_b^{-1}(\bar\alpha),\Psi_m^{-1}(0))$ and by dominated convergence on $z\in (\Psi_m^{-1}(0),\infty)$). Because $Y$ is quasi left-continuous, due to the continuity of $\Phi_{\alpha,\bar\alpha}$ and because now $\alpha>0$, we further infer that it suffices to establish the Laplace transform formula for $a>0$ (one can pass to the limit $a\downarrow 0$ by bounded convergence on the l.h.s. and by continuity of $\Phi_{\alpha,\bar\alpha}$ on the r.h.s. of \eqref{eq:firs-passage.cbic}). Thus we may and do assume $a>0$.

Consider now the process $M$ of Theorem~\ref{prop:cbic-sde-method}\ref{prop:cbic-sde-method:mtg}\ref{prop:cbic-sde-method:mtg:II} with $f=\Phi_{\alpha,\bar\alpha}$, but stopped at $\tau_a$, i.e., modulo its deterministic initial value, the process \footnotesize
$$\left(\Phi_{\alpha,\bar\alpha}(Y_{t\land \tau_a})e^{-\alpha (t\land \tau_a)-\bar\alpha\int_0^{t\land \tau_a}Y_s\dd s}\mathbbm{1}_{\{t\land\tau_a<\zeta\}}-\int_0^{t\land \tau_a\land \zeta}\!\!\!\!e^{-\alpha s-\bar\alpha\int_0^sY_u\dd u}\left(\mathcal{A}\Phi_{\alpha,\bar\alpha}-\alpha \Phi_{\alpha,\bar\alpha}-\bar\alpha \mathrm{id}_{[0,\infty)} \Phi_{\alpha,\bar\alpha}\right)(Y_s)\dd s\right)_{t\in [0,\infty)}.$$\normalsize
 Notice that  $Y_{\tau_a}=a$  on $\{\tau_a<\zeta\}=\{\tau_a<\infty\}$ (since $Y$ has no negative jumps), while $\lim_{t\uparrow \zeta}\Phi_{\alpha,\bar\alpha}(Y_t)e^{-\alpha t-\bar\alpha\int_0^tY_s\dd s}=0$ a.s.-$\PP$ on $\{\tau_a=\infty\}$ (since $\alpha>0$ and $\Phi_{\alpha,\bar\alpha}$ vanishes at infinity). Therefore, by Theorem~\ref{prop:cbic-sde-method}\ref{prop:cbic-sde-method:mtg}\ref{prop:cbic-sde-method:mtg:II} \& Remark~\ref{remark:mtg}, and because martingales have a constant expectation, the Laplace transform formula \eqref{eq:firs-passage.cbic} reduces further to establishing that $\Phi_{\alpha,\bar\alpha}$ is $C^2$, bounded on $[a,\infty)$ and satisfies
 $$\mathcal{A}\Phi_{\alpha,\bar\alpha}(x)=\alpha\Phi_{\alpha,\bar\alpha}(x)+\bar \alpha x\Phi_{\alpha,\bar\alpha}(x),\quad x\in [a,\infty).$$



This however is a straightforward computation made easy by the fact that we are working on restriction to $x\in [a,\infty)\subset(0,\infty)$ (justifying differentiation under the integral sign): 
\begin{align*}
\mathcal{A}\Phi_{\alpha,\bar\alpha}(x)=&\mathcal{L}^{\Psi_m}\Phi_{\alpha,\bar\alpha}(x)+x\mathcal{L}^{\Psi_b}\Phi_{\alpha,\bar\alpha}(x)\\
=&\int_{\Psi_b^{-1}(\bar{\alpha})}^\infty \frac{\dd z}{\Psi_b(z)-\bar{\alpha}}\exp\left(-xz-\int_{\Psi_m^{-1}(\alpha)}^z\frac{\Psi_m(u)-\alpha}{\Psi_b(u)-\bar\alpha}\dd u\right)\left(\Psi_m(z)+x\Psi_b(z)\right)\\
=&\int_{\Psi_b^{-1}(\bar{\alpha})}^\infty \frac{\dd z}{\Psi_b(z)-\bar{\alpha}}\exp\left(-xz-\int_{\Psi_m^{-1}(\alpha)}^z\frac{\Psi_m(u)-\alpha}{\Psi_b(u)-\bar\alpha}\dd u\right)\\
&\qquad\qquad\qquad\times \left(\Psi_m(z)-\alpha+\alpha+x(\Psi_b(z)-\bar\alpha+\bar\alpha)\right)\\
=&\alpha\Phi_{\alpha,\bar\alpha}(x)+\bar \alpha x\Phi_{\alpha,\bar\alpha}(x)\\
&+\int_{\Psi_b^{-1}(\bar{\alpha})}^\infty\dd z \frac{\Psi_m(z)-\alpha}{\Psi_b(z)-\bar{\alpha}}\exp\left(-xz-\int_{\Psi_m^{-1}(\alpha)}^z\frac{\Psi_m(u)-\alpha}{\Psi_b(u)-\bar\alpha}\dd u\right)\\
&+x\int_{\Psi_b^{-1}(\bar{\alpha})}^\infty\dd z\exp\left(-xz-\int_{\Psi_m^{-1}(\alpha)}^z\frac{\Psi_m(u)-\alpha}{\Psi_b(u)-\bar\alpha}\dd u\right)\\
=&\alpha\Phi_{\alpha,\bar\alpha}(x)+\bar \alpha x\Phi_{\alpha,\bar\alpha}(x)-\left[\exp\left(-xz-\int_{\Psi_m^{-1}(\alpha)}^z\frac{\Psi_m(u)-\alpha}{\Psi_b(u)-\bar\alpha}\dd u\right)\big \vert_{z=\Psi_b^{-1}(\bar\alpha)}^\infty\right]\\
=&\alpha\Phi_{\alpha,\bar\alpha}(x)+\bar \alpha x\Phi_{\alpha,\bar\alpha}(x),
\end{align*}
where the penultimate equality is integration by parts and the last equality is by elementary estimation using $\Psi_m(\Psi_b^{-1}(\bar\alpha))<\alpha$ (at $z=\infty$ it is trivial, at $z=\Psi_b^{-1}(\bar\alpha)+$ one gets a divergent integral of the form $\sim\int_{0+}\frac{\dd u}{u}$).
\end{proof}

\begin{corollary}\label{corollary:convergence-of-CBM}
We have that $\PP_x(\sigma_0<\infty)>0$ for all $x\in [0,\infty)$ and $\lim_{\mathsf{l}-}\xi=\infty$ a.s. on $\{\sigma_0=\infty\}$, in particular there is no phenomenon of extinguishing (i.e. the event $\{\lim_\infty \xi=0\}\cap \{\sigma_0=\infty\}$ is negligible).
\end{corollary}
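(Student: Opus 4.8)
The plan is to read off the first assertion directly from Theorem~\ref{entrancetimeCBIculling}, and to obtain the convergence statement from a monotone bounded martingale built out of the hitting-zero probability, using Theorem~\ref{prop:cbic-sde-method}\ref{prop:cbic-sde-method:iv} to dispose of the finite-lifetime case.

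First I would record the reduction $\{\sigma_0<\mathsf{l}\}=\{\sigma_0<\infty\}$: since $0$ is absorbing, hitting it at a finite time forces $\mathsf{l}=\infty$, so the two events coincide. For the positivity, fix $\bar\alpha=0$ and choose $\alpha>\max(0,\Psi_m(\Psi_b^{-1}(0)))$, so that the hypothesis $\Psi_m(\Psi_b^{-1}(\bar\alpha))<\alpha$ of Theorem~\ref{entrancetimeCBIculling} holds; applying \eqref{eq:firs-passage.cbic} with $a=0$ gives $\PP_x[e^{-\alpha\sigma_0};\sigma_0<\mathsf{l}]=\Phi_{\alpha,0}(x)/\Phi_{\alpha,0}(0)>0$, because $\Phi_{\alpha,0}$ is finite and strictly positive on $[0,\infty)$. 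Hence $\PP_x(\sigma_0<\infty)=\PP_x(\sigma_0<\mathsf{l})>0$ for every $x$ (the case $x=0$ being trivial). Writing $h(y):=\PP_y(\sigma_0<\infty)$, the previous line shows $h>0$ on $(0,\infty)$; moreover the strong Markov property (Theorem~\ref{prop:cbic-sde-method}\ref{prop:cbic-sde-method:strongMarkov}) together with downward skip-freeness shows that to reach $0$ from $y>y'>0$ the process must first pass through $y'$, whence $h(y)=\PP_y(\sigma_{y'}<\infty)\,h(y')\le h(y')$; thus $h$ is nonincreasing, with limit $\kappa:=\lim_{y\to\infty}h(y)\ge 0$ and $h\ge\kappa$ throughout.

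For the convergence statement I would split $\{\sigma_0=\infty\}$ according to the lifetime. On $\{\sigma_0=\infty,\mathsf{l}<\infty\}$ the conclusion $\lim_{\mathsf{l}-}\xi=\infty$ is exactly Theorem~\ref{prop:cbic-sde-method}\ref{prop:cbic-sde-method:iv}. On $\{\sigma_0=\infty,\mathsf{l}=\infty\}$ I would consider the bounded martingale $M_t:=\PP_x(\sigma_0<\infty\mid\FF_t)$. Since $\{\sigma_0<\infty\}\in\FF_\infty$, L\'evy's upward theorem gives $M_t\to\mathbbm{1}_{\{\sigma_0<\infty\}}$ a.s.; on the other hand the Markov property yields $M_t=\mathbbm{1}_{\{\sigma_0\le t\}}+\mathbbm{1}_{\{\sigma_0>t\}}h(\xi_t)$, which on $\{\sigma_0=\infty,\mathsf{l}=\infty\}$ (where $t<\zeta$ for all $t$) reduces to $M_t=h(\xi_t)$. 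Consequently $h(\xi_t)\to 0$ a.s. on this event.

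The key point, and where the only real subtlety lies, is that this forces $\kappa=0$ and $\xi_t\to\infty$ \emph{simultaneously}, so that no separate computation of $\lim_{y\to\infty}h(y)$ is needed. Indeed, if $\PP_x(\sigma_0=\infty,\mathsf{l}=\infty)>0$, then on that event $h(\xi_t)\ge\kappa$ for all $t$ while $h(\xi_t)\to 0$, forcing $\kappa=0$; and once $\kappa=0$, were $\liminf_{t\to\infty}\xi_t=c<\infty$ along some $t_k\to\infty$ we would have $h(\xi_{t_k})\ge h(c+1)>0$, contradicting $h(\xi_t)\to 0$, so in fact $\xi_t\to\infty$. (If instead $\PP_x(\sigma_0=\infty,\mathsf{l}=\infty)=0$, the finite-lifetime case already covers $\{\sigma_0=\infty\}$ up to a null set.) This self-bootstrapping is what makes the argument robust: for small $\alpha$ the hypotheses of Theorem~\ref{entrancetimeCBIculling} may fail (precisely in the supercritical regime $\Psi_m(\Psi_b^{-1}(0))>0$), so $\kappa$ is not accessible from the explicit formula \eqref{eq:Phi}. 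The main obstacle is therefore not the positivity but ruling out oscillation and convergence to a finite positive level; the monotone bounded martingale $h(\xi_\cdot)$ handles all of these at once. Finally, the statement about extinguishing is immediate: on $\{\sigma_0=\infty\}$ we have $\lim_{\mathsf{l}-}\xi=\infty$, so $\{\lim_\infty\xi=0\}\cap\{\sigma_0=\infty\}$ is negligible.
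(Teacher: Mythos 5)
Your proof is correct, but it takes a genuinely different route from the paper's for the main (convergence) assertion. The paper argues by contradiction with a renewal-type scheme: if with positive probability $\xi$ is $\le N$ at arbitrarily large times without ever hitting zero, it introduces successive return times to $[0,N]$ offset by independent exponential clocks; since by \eqref{eq:firs-passage.cbic} the process has a uniformly positive chance $\beta>0$ of hitting $0$ before an independent exponential elapses, from any starting point in $[0,N]$, failing infinitely often is ruled out by the strong Markov property --- a geometric-trials argument. You instead run the classical harmonic-function/transience argument: $h(y)=\PP_y(\sigma_0<\infty)$ is nonincreasing (by downward skip-freeness and the strong Markov property) and strictly positive (by Theorem~\ref{entrancetimeCBIculling}), L\'evy's upward theorem identifies $\lim_{t\to\infty} h(\xi_t)=\mathbbm{1}_{\{\sigma_0<\infty\}}=0$ a.s.\ on $\{\sigma_0=\infty,\mathsf{l}=\infty\}$, and monotonicity plus positivity of $h$ then force $\xi_t\to\infty$ there; the finite-lifetime case is disposed of by Theorem~\ref{prop:cbic-sde-method}\ref{prop:cbic-sde-method:iv} (which the paper also uses, implicitly). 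Your route buys a cleaner, randomization-free argument (no auxiliary exponential clocks), at the cost of needing the conditional-expectation form of the Markov property with the universally measurable function $h$ --- this is exactly what the remark following Theorem~\ref{prop:cbic-sde-method} supplies, and you should cite it at that step rather than appeal to the bare statement of Theorem~\ref{prop:cbic-sde-method}\ref{prop:cbic-sde-method:strongMarkov}. Two cosmetic points: the excursion about $\kappa=\lim_{y\to\infty}h(y)$ is superfluous, since your liminf argument needs only $h>0$ and monotonicity, never $\kappa=0$; and the intermediate identity $M_t=\mathbbm{1}_{\{\sigma_0\le t\}}+\mathbbm{1}_{\{\sigma_0>t\}}h(\xi_t)$ should carry the qualifier $t<\mathsf{l}$ on the second term (after explosion the conditional probability is $0$, not $h(\xi_t)$) --- neither affects the validity of your use of it on $\{\sigma_0=\infty,\mathsf{l}=\infty\}$.
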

\begin{remark}
We may recall that CSBP can extinguish \cite[p.~343]{Kyprianoubook}.
\end{remark}
\begin{proof}
It is clear from the strict positivity of the scale function of \eqref{eq:Phi} that $\PP_x(\sigma_0<\infty)>0$ for all $x\in [0,\infty)$. 

Suppose per absurdum that $\lim_{\mathsf{\zeta}-}Y\ne \infty$ with positive $\PP$-probability on $\{\tau_0=\infty\}$. Then for some $N\in [0,\infty)$, on an event $A$ of positive $\PP$-probability, the process $Y$ will be $\leq N$ at arbitrarily large times, but never hit zero. 

Fix some $\alpha\in (0,\infty)$. Consider the sequence $(S_k)_{k\in \mathbb{N}}$ of random times defined as follows. First, if needed, enlarge the probability space to gain access to  $e_\alpha^{(k)}$, $k\in \mathbb{N}$, independent exponentially with rate $\alpha$ distributed $(0,\infty)$-valued random variables, independent of $Y$. Second, put, inductively,
$$
S_{k}:=\inf\{t\in [S_{k-1},\zeta):Y_t\in [0,N]\}+e_\alpha^{(k)},\quad k\in \mathbb{N},
$$
with the convention $S_0:=0$. Thus, in plainer tongue, $S_1=(\text{the first time $Y$ enters $[0,N]$})+e_\alpha^{(1)}$; $S_2:=(\text{the first time $Y$ enters $[0,N]$ after $S_1$})+e_\alpha^{(2)}$; and so on. Perhaps $S_k=\infty$ at some $k\in \mathbb{N}$, in which case $S_{l+1}=\infty$ for all $l\in \mathbb{N}$, $l\geq k$.  But anyway $A\subset \{S_k<\infty\text{ for all }k\in \mathbb{N}\}$. Now, $Y$ always has a strictly positive chance $\beta>0$ to hit zero before an independent exponential random time of rate $\alpha$ has elapsed, no matter where in $[0,N]$ it starts. On $A$ it must fail to do so infinitely many times over. By the strong Markov property it is impossible. (Of course a sufficiently large deterministic time could also be used in lieu of the $e_\alpha^{(k)}$, $k\in \mathbb{N}$.)
\end{proof}

\begin{corollary}\label{corollary:hitting-zero}
Suppose  $(\Psi_b)'(0+)\geq 0$ (i.e. $\Psi_b^{-1}(0)=0$) and fix $\theta\in (0,\infty)$. Then $\PP_x(\sigma_0<\infty)=1$ for all $x\in [0,\infty)$ iff 
\begin{equation}\label{eq:abelian}
\int_0^{\theta}\frac{\dd z}{\Psi_b(z)}\exp\left(\int_z^{\theta}\frac{\Psi_m(u)}{\Psi_b(u)}\dd u\right)=\infty
\end{equation}
(which is true if $(\Psi_m)'(0+)>-\infty$ and $(\Psi_b)'(0+)>0$);
 when \eqref{eq:abelian} fails, then 
 \begin{equation}\label{eq:hitting-zero}
 \PP_x(\sigma_0<\infty)=\frac{\int_0^\infty \frac{\dd z}{\Psi_b(z)}e^{-xz-\int_{\theta}^z\frac{\Psi_m(u)}{\Psi_b(u)}\dd u}}{\int_0^\infty \frac{\dd z}{\Psi_b(z)}e^{-\int_{\theta}^z\frac{\Psi_m(u)}{\Psi_b(u)}\dd u}}<1\text{ for all }x\in (0,\infty).
 \end{equation}
\end{corollary}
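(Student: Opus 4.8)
The plan is to read off $\PP_x(\sigma_0<\infty)$ from the first-passage Laplace transform of Theorem~\ref{entrancetimeCBIculling} by setting $a=0$, $\bar\alpha=0$ and letting $\alpha\downarrow 0$. Since $(\Psi_b)'(0+)\ge 0$ gives $\Psi_b^{-1}(0)=0$, for every $\alpha\in(0,\infty)$ the hypothesis $\Psi_m(\Psi_b^{-1}(0))=\Psi_m(0)=0<\alpha$ of that theorem holds, and by Remark~\ref{rmk:any-theta} I may replace the delimiter $\Psi_m^{-1}(\alpha)$ by the fixed $\theta$ at the cost of an $x$-independent multiplicative constant that cancels in the ratio \eqref{eq:firs-passage.cbic}. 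Writing
\[
g_\alpha(z):=\frac{1}{\Psi_b(z)}\exp\Big(-\int_\theta^z\frac{\Psi_m(u)-\alpha}{\Psi_b(u)}\,\dd u\Big)=g(z)\,\exp\Big(\alpha\int_\theta^z\frac{\dd u}{\Psi_b(u)}\Big),\qquad z\in(0,\infty),
\]
where $g=g_0$ is exactly the integrand of \eqref{eq:abelian}--\eqref{eq:hitting-zero}, the theorem reads $\PP_x[e^{-\alpha\sigma_0};\sigma_0<\mathsf{l}]=\big(\int_0^\infty e^{-xz}g_\alpha(z)\,\dd z\big)\big/\big(\int_0^\infty g_\alpha(z)\,\dd z\big)$. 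On the left, monotone convergence together with $\{\sigma_0<\mathsf{l}\}=\{\sigma_0<\infty\}$ yields $\PP_x(\sigma_0<\infty)$ as $\alpha\downarrow 0$, so everything reduces to the limit of the right-hand ratio. The structural fact I will use throughout is that $g_\alpha$ is monotone in $\alpha$: since $\int_\theta^z\frac{\dd u}{\Psi_b(u)}$ is negative for $z<\theta$ and positive for $z>\theta$, one has $g_\alpha\uparrow g$ on $(0,\theta)$ and $g_\alpha\downarrow g$ on $(\theta,\infty)$ as $\alpha\downarrow 0$.

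Suppose first that \eqref{eq:abelian} fails, i.e. $\int_0^\theta g<\infty$. Then $g$ is integrable on $(\theta,\infty)$ as well (bound $g\le g_{\alpha_0}$ there for a fixed $\alpha_0>0$, recalling $\int_0^\infty g_{\alpha_0}<\infty$ from the finiteness of $\Phi_{\alpha_0,0}$ established in the proof of Theorem~\ref{entrancetimeCBIculling}), hence $g$ is integrable on $(0,\infty)$. Splitting the integrals at $\theta$ and invoking monotone convergence on $(0,\theta)$ and dominated convergence (dominant $g_{\alpha_0}$) on $(\theta,\infty)$ for both numerator and denominator, the ratio converges to $\big(\int_0^\infty e^{-xz}g(z)\,\dd z\big)\big/\big(\int_0^\infty g(z)\,\dd z\big)$, which is precisely \eqref{eq:hitting-zero}; as $e^{-xz}<1$ for $z\in(0,\infty)$ when $x>0$, this value is $<1$. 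Conversely, suppose \eqref{eq:abelian} holds, i.e. $\int_0^\theta g=\infty$. Then by monotone convergence $\int_0^\theta g_\alpha\uparrow\int_0^\theta g=\infty$, so the denominator blows up. To force the ratio to $1$ I will show $\int_0^\infty(1-e^{-xz})g_\alpha(z)\,\dd z=o\big(\int_0^\infty g_\alpha\big)$: fixing $\varepsilon>0$, choose $\delta\in(0,\theta)$ with $1-e^{-xz}<\varepsilon$ on $(0,\delta)$; the contribution of $(0,\delta)$ is then at most $\varepsilon\int_0^\infty g_\alpha$, while those of $(\delta,\theta)$ and $(\theta,\infty)$ stay bounded by $\int_\delta^\theta g$ and $\int_\theta^\infty g_{\alpha_0}$ (using $g_\alpha\le g$ on $(0,\theta)$ and $g_\alpha\le g_{\alpha_0}$ on $(\theta,\infty)$ for $\alpha\le\alpha_0$), hence negligible once divided by the exploding denominator. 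Thus $\limsup_{\alpha\downarrow 0}$ of the deficit is $\le\varepsilon$, giving $\PP_x(\sigma_0<\infty)=1$ for all $x$. The two cases together yield the stated equivalence (for $x=0$ one has $\sigma_0=0$, consistent with both formulas).

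Finally, the parenthetical sufficient condition follows from a near-zero asymptotic: if $(\Psi_b)'(0+)>0$ then $\Psi_b(z)\sim(\Psi_b)'(0+)\,z$ as $z\downarrow 0$ and, if moreover $(\Psi_m)'(0+)>-\infty$, then $\Psi_m(u)/\Psi_b(u)\to(\Psi_m)'(0+)/(\Psi_b)'(0+)$ as $u\downarrow 0$, so $\int_z^\theta\frac{\Psi_m(u)}{\Psi_b(u)}\,\dd u$ stays bounded and $g(z)\sim c/z$ for some $c>0$ as $z\downarrow 0$; hence $\int_0^\theta g=\infty$, i.e. \eqref{eq:abelian}. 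I expect the main obstacle to be the Abelian-type step in the divergent case, namely rigorously converting $\int_0^\theta g=\infty$ into concentration of the $g_\alpha$-mass at $0$ and thence into the limit $1$; the remaining interchanges of limit and integral are routine monotone/dominated-convergence arguments.
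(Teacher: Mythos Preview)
Your proposal is correct and follows the same strategy as the paper: set $a=\bar\alpha=0$ in Theorem~\ref{entrancetimeCBIculling}, replace the delimiter by the fixed $\theta$ via Remark~\ref{rmk:any-theta}, and pass $\alpha\downarrow 0$ using monotone convergence on $(0,\theta)$ and dominated convergence on $(\theta,\infty)$; the non-divergent case and the parenthetical sufficient condition are handled identically (the paper in fact leaves the latter implicit).

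The only real difference is in the divergent case \eqref{eq:abelian}. You keep $\theta$ fixed and run an $\varepsilon$--$\delta$ Abelian argument to show the mass of $g_\alpha$ concentrates at $0$. The paper instead exploits the arbitrariness of $\theta$: from $e^{-xz}\ge e^{-x\theta}$ on $(0,\theta)$ and the fact that $\int_0^\theta g_\alpha\to\infty$ while $\int_\theta^\infty g_\alpha$ stays bounded, one gets $\PP_x(\sigma_0<\infty)\ge e^{-\theta x}$ directly, and then lets $\theta\downarrow 0$. Your auxiliary $\delta$ plays the role that the paper assigns to $\theta$ itself; the paper's packaging is a line shorter, but the content is the same.
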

Thus when $(\Psi_b)'(0+)> 0$ and $(\Psi_m)'(0+)>-\infty$ the migrations cannot offset the branching to turn the process from one that a.s. becomes extinct or extinguished to one for which this would not be the case, while the situation for $(\Psi_b)'(0+)>0$ but $(\Psi_m)'(0+)=-\infty$ appears to be  more delicate:
\begin{example}\label{example:extinction}
Let $\Psi_b=\mathrm{id}_{[0,\infty)}$ ($\therefore$ $\Psi_b'(0+)>0$), $\sigma_m^2>0$, $\gamma_m=0$, $\pi_m$ vanishing on $(0,2]$. If $\pi_m((z,\infty))=2/\log(z)$ for $z\in [2,\infty)$ ($\therefore$ $(\Psi_m)'(0+)=-\infty$; such a constellation \emph{can} transpire), then it is elementary (if tedious) to check using the abelian theorem for Laplace transforms \cite[Theorem~XIII.5.2]{feller} that the integral of \eqref{eq:abelian} converges. On the other hand if $\pi_m((z,\infty))=z^{-1/2}$ for $z\in [2,\infty)$  ($\therefore$ $(\Psi_m)'(0+)=-\infty$; and again such a constellation \emph{can} transpire), then one checks similarly that the integral of \eqref{eq:abelian} diverges.
\end{example}
Condition \eqref{eq:abelian} may also be compared with the recurrence/transience condition of not-supercritical  CBIs \cite[Theorem~3(a)]{Duhalde}. Examples in which $\Psi_b'(0+)=0$, $\Psi_m'(0+)>-\infty$ and when \eqref{eq:abelian} fails or obtains can be reconstructed from those  of \cite[Corollary~4]{Duhalde} (say by adding a Brownian component to the $\Phi$ featuring there).
\begin{proof}
Take $\bar\alpha=0$, $\alpha>0$, $a=0$ in \eqref{eq:firs-passage.cbic} and pass to the limit $\alpha\downarrow 0$ (with $\theta$  as per Remark~\ref{rmk:any-theta}) splitting the integral of $\Phi_{\alpha,0}$ into two parts: on $(0,\theta)$ monotone convergence applies; on $(\theta,\infty)$ -- dominated. If \eqref{eq:abelian} fails \eqref{eq:hitting-zero} follows at once; otherwise we get $\PP_x(\sigma_0<\infty)\geq e^{-\theta x}$, and since $\theta\in (0,\infty)$ is arbitrary, on letting $\theta\downarrow 0$, $\PP_x(\sigma_0<\infty)=1$.
\end{proof}

\begin{corollary}
Assume  $\Psi_m^{-1}(0)\geq \Psi_b^{-1}(0)>0$. Then $\PP_x(\sigma_0<\infty)<1$ for all $x\in (0,\infty)$. 
\end{corollary}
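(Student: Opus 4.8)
The plan is to read off $\PP_x(\sigma_0<\infty)$ as a monotone limit of the first-passage Laplace transform \eqref{eq:firs-passage.cbic} and then extract a bound strictly below $1$ by a uniform (in $\alpha$) estimate, so that the limit never has to be evaluated explicitly. Write $\beta:=\Psi_b^{-1}(0)$; by hypothesis $0<\beta\le \Psi_m^{-1}(0)$, whence $\Psi_m(\beta)\le 0$ (recall $\Psi_m\le 0$ throughout $[0,\Psi_m^{-1}(0)]$). Consequently, for every $\alpha>0$ the standing hypothesis $\Psi_m(\Psi_b^{-1}(0))\le 0<\alpha$ of Theorem~\ref{entrancetimeCBIculling} (with $\bar\alpha=0$) is met, and applying that theorem with $a=0$ gives, for each $x\in(0,\infty)$ and $\alpha>0$,
\[
\PP_x[e^{-\alpha\sigma_0};\sigma_0<\mathsf{l}]=\frac{\Phi_{\alpha,0}(x)}{\Phi_{\alpha,0}(0)}=\frac{\int_\beta^\infty e^{-xz}\,w_\alpha(z)\,\dd z}{\int_\beta^\infty w_\alpha(z)\,\dd z},\qquad w_\alpha(z):=\frac{1}{\Psi_b(z)}\exp\Big(-\int_{\Psi_m^{-1}(\alpha)}^z\frac{\Psi_m(u)-\alpha}{\Psi_b(u)}\,\dd u\Big),
\]
where $w_\alpha>0$ on $(\beta,\infty)$ and both integrals are finite, being $\Phi_{\alpha,0}(x),\Phi_{\alpha,0}(0)\in(0,\infty)$.

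The crux is that the integration variable $z$ runs over $[\beta,\infty)$ with $\beta>0$, so for $x>0$ one has the elementary bound $e^{-xz}\le e^{-x\beta}$ over the whole range. As $w_\alpha\ge 0$, this yields $\int_\beta^\infty e^{-xz}w_\alpha\le e^{-x\beta}\int_\beta^\infty w_\alpha$, i.e.
\[
\PP_x[e^{-\alpha\sigma_0};\sigma_0<\mathsf{l}]\le e^{-x\beta}\qquad\text{for every }\alpha>0.
\]
Because $0$ is absorbing we have $\{\sigma_0<\mathsf{l}\}=\{\sigma_0<\infty\}$, and $e^{-\alpha\sigma_0}\uparrow 1$ on this event as $\alpha\downarrow 0$; monotone convergence then gives $\PP_x(\sigma_0<\infty)=\lim_{\alpha\downarrow 0}\PP_x[e^{-\alpha\sigma_0};\sigma_0<\mathsf{l}]\le e^{-x\beta}$. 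Since $\beta=\Psi_b^{-1}(0)>0$ and $x>0$, the right-hand side is $<1$, which is the assertion (and in fact the argument delivers the explicit estimate $\PP_x(\sigma_0<\infty)\le e^{-x\Psi_b^{-1}(0)}$).

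I expect the only genuine subtlety to be avoided rather than confronted. A more literal route would first pass $\alpha\downarrow 0$ inside the two integrals and then argue $\text{numerator}<\text{denominator}$ via the extra factor $e^{-xz}<1$; this is clean when $\Psi_m^{-1}(0)>\Psi_b^{-1}(0)$, for then $\Psi_m(\beta)<0$ tames the $z\downarrow\beta$ singularity of $\Psi_b(z)^{-1}$ and makes $\int_\beta^\infty \Psi_b(z)^{-1}\exp(-\int_\theta^z\Psi_m/\Psi_b)\,\dd z$ converge. It breaks down, however, at the boundary $\Psi_m^{-1}(0)=\Psi_b^{-1}(0)$, where $\Psi_m(\beta)=0$ leaves $\Psi_b(z)^{-1}$ non-integrable at $\beta$ and the limiting integrals diverge. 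Carrying the uniform bound $e^{-xz}\le e^{-x\beta}$ through before taking $\alpha\downarrow 0$ sidesteps this dichotomy entirely, so the boundary case requires no separate treatment.
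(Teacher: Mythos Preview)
Your argument is correct. The uniform bound $e^{-xz}\le e^{-x\beta}$ on $z\in[\beta,\infty)$ does the job cleanly, and the passage $\alpha\downarrow 0$ by monotone convergence is justified. The identification $\{\sigma_0<\mathsf l\}=\{\sigma_0<\infty\}$ is fine because $0$ is absorbing, so $\sigma_0<\infty$ forces $\mathsf l=\infty$.

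The paper, however, proceeds more directly: it simply applies Theorem~\ref{entrancetimeCBIculling} with $\alpha=\bar\alpha=a=0$. You appear to have overlooked that the hypotheses of that theorem are already met at $\alpha=\bar\alpha=0$ under the assumption $\Psi_m^{-1}(0)\ge\Psi_b^{-1}(0)>0$. When the inequality is strict, the first alternative of the theorem applies (since $\Psi_m(\Psi_b^{-1}(0))<0$ and $\Psi_b^{-1}(0)>0$), and $\Phi_{0,0}$ is the integral in \eqref{eq:Phi}; when $\Psi_m^{-1}(0)=\Psi_b^{-1}(0)=:\Phi>0$, the \emph{second} alternative applies and $\Phi_{0,0}(x)=e^{-\Phi x}$ is given explicitly. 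In either case $\Phi_{0,0}$ is strictly decreasing, so $\PP_x(\sigma_0<\infty)=\Phi_{0,0}(x)/\Phi_{0,0}(0)<1$ for $x>0$. Thus the boundary case you worried about is already built into the theorem via its second clause, and no limiting procedure is needed. That said, your route has the merit of producing the explicit bound $\PP_x(\sigma_0<\infty)\le e^{-x\Psi_b^{-1}(0)}$, which in the boundary case coincides with the exact value.
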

\begin{proof}
Take $\alpha=\bar\alpha=a=0$ in \eqref{eq:firs-passage.cbic}.
\end{proof}

\begin{remark}\label{rmk:not-cm}
In the context of Theorem~\ref{entrancetimeCBIculling} restrict to $\bar\alpha=0$ and $\alpha>0$. \eqref{eq:Phi}-\eqref{eq:firs-passage.cbic} were given under the condition $\Psi_m(\Psi_b^{-1}(0))\leq \alpha$, which is a little mysterious. To make it somewhat less so, we shall argue that when $\Psi_m(\Psi_b^{-1}(0))>0$, i.e. $ \Psi_m^{-1}(0)<\Psi_b^{-1}(0)$, then for $\alpha\in (0,\Psi_m(\Psi_b^{-1}(0)))$ (which implies $\Psi_m^{-1}(\alpha)<\Psi_b^{-1}(0)$) the identity $$\Phi_\alpha(x):=\PP_x[e^{-\alpha \sigma_0};\sigma_0<\mathsf{l}]=\int e^{-xz}\nu (\dd z),\quad x\in [0,\infty),$$ can hold for no measure $\nu$ on $\mathcal{B}_{[0,\infty)}$. Suppose per absurdum that it were so. Then for any starting value $x\in (0,\infty)$, taking $\mu=\delta_x$ in Theorem~\ref{prop:cbic-sde-method}, the process $(\Phi_\alpha(Y_t)e^{-\alpha(t\land \tau_0)}\mathbbm{1}_{\{t\land \tau_0<\zeta\}})_{t\in [0,\infty)}$ would be a bounded martingale (essentially by the Markov property; we refer to \cite[Proposition~3.1]{vidmar2021characterizations} for the detailed computation) and so by Theorem~\ref{prop:cbic-sde-method}\ref{prop:cbic-sde-method:mtg} and Remark~\ref{remark:mtg}
we would have $\AA\Phi_\alpha=\alpha\Phi_\alpha$ on $(0,\infty)$, i.e. 
$$\int( x\Psi_b(z)+\Psi_m(z))e^{-xz}\nu(\dd z)=\alpha\int e^{-xz}\nu(\dd z),\quad x\in (0,\infty).$$
Rearranging and effecting a Tonelli-Fubini gives 
$$\int e^{-xz}\Psi_b(z)\nu(\dd z)=\int_0^\infty e^{-xz}\int_{[0,z]}(\alpha-\Psi_m(y))\nu(\dd y)\dd z,\quad x\in (0,\infty);$$
therefore 
$$\Psi_b(z)\nu(\dd z)=\dd z\int_{[0,z]}(\alpha-\Psi_m(y))\nu(\dd y),\quad z\in [0,\infty).$$ Comparing the sign of the measure on the l.h.s. with the sign of the measure on the r.h.s. we see that $\nu$ must be carried by $[\Psi_m^{-1}(\alpha),\infty)$. In that case, since $\nu\ne 0$, we get that the r.h.s. has unbounded support and is a negative measure unless it is carried by $\Psi_m^{-1}(\{\alpha\})=\{\Psi_m^{-1}(\alpha)\}$ ($\because $ $\alpha>0$) in which case the r.h.s. is the zero measure. The former case is in contradiction with the fact that the l.h.s. is nonnegative on $[\Psi_b^{-1}(0),\infty)$. The latter case requires that $\nu$ also be carried by $\Psi_b^{-1}(\{0\})$, therefore $\Psi_b(\Psi_m^{-1}(\alpha))=0$, which is in contradiction with $\Psi_b^{-1}(0)>\Psi_m^{-1}(\alpha)$.

\noindent It means that we were not being ``completely dumb'' by failing,  for  $\alpha\in  (0,\Psi_m(\Psi_b^{-1}(0)))$, to recognize a different would-be (nonnegative) ``density kernel'' in \eqref{eq:Phi}; the completely monotone  \cite[Definition~1.3]{bernstein} character of $(0,\infty)\ni x\mapsto \PP_x[e^{-\alpha \sigma_0};\sigma_0<\mathsf{l}]$, being true for $\alpha\in [\Psi_m(\Psi_b^{-1}(0)),\infty)$, in fact does not extend to $\alpha\in  (0,\Psi_m(\Psi_b^{-1}(0)))$ (by Bernstein's theorem \cite[Theorem~1.4]{bernstein}). 
\end{remark}

We turn to explosions. 

\begin{theorem}[Explosion times of CBMs]\label{thm:explosions}
Let $\alpha,\bar\alpha$ be from $[0,\infty)$. Suppose $\Psi_m(\Psi_b^{-1}(\bar \alpha))<\alpha$.   If $\bar\alpha=0$ we assume further that $\int_{0+}\vert \Psi_b\vert^{-1}<\infty$ (explosivity condition for the associated CSBP with branching mechanism $\Psi_b$ \cite[Theorem~12.3]{Kyprianoubook}, in particular necessarily then $\Psi_b^{-1}(0)>0$).  Put 
\begin{equation}\label{eq:scale-for-explosion}
\Psi_{\alpha,\bar\alpha}(x):=1-\alpha Z_{\alpha,\bar\alpha}(x):=1-\alpha\int_0^{\Psi_b^{-1}(\bar{\alpha})}\frac{\dd z}{\bar{\alpha}-\Psi_b(z)}\exp\left(-xz-\int_0^z\frac{\alpha-\Psi_m(u)}{\bar\alpha-\Psi_b(u)}\dd u\right), \quad x\in [0,\infty).
\end{equation}
 Then for   $a\leq x$  from $[0,\infty)$:
\begin{enumerate}[(i)] 
\item when $\int_{0+}\vert \Psi_b\vert^{-1}<\infty$ (taking $\bar\alpha=0$),
\begin{equation}
\label{eq:explosion.cbic}
\PP_x[e^{-\alpha \mathsf{l}};\mathsf{l}<\sigma_a]=\Psi_{\alpha,0}(x)-\frac{\Phi_{\alpha,0}(x)}{\Phi_{\alpha,0}(a)}\Psi_{\alpha,0}(a);
\end{equation}
\item  for $\bar\alpha>0$, 
\begin{equation}\label{eq:end-times}
\PP_x\left[\int_0^{\sigma_a\land \mathsf{l}}e^{-\alpha s-\bar\alpha\int_0^sY_u\dd u}\dd s\right]=Z_{\alpha,\bar\alpha}(x)-\frac{\Phi_{\alpha,\bar\alpha}(x)}{\Phi_{\alpha,\bar\alpha}(a)}Z_{\alpha,\bar\alpha}(a).
\end{equation}
\end{enumerate}
\end{theorem}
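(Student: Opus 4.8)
The plan is to run, for each of the two regimes, the same optional-stopping scheme that drove the proof of Theorem~\ref{entrancetimeCBIculling}, but now feeding into the martingale of Theorem~\ref{prop:cbic-sde-method}\ref{prop:cbic-sde-method:mtg} a test function whose generator equation carries an inhomogeneous term. Before anything else I would record the two analytic facts that make everything go. First, writing $Z_{\alpha,\bar\alpha}(x)=\int e^{-xz}\rho(\dd z)$ with $\rho(\dd z)=\frac{1}{\bar\alpha-\Psi_b(z)}\exp\!\big(-\int_0^z\frac{\alpha-\Psi_m(u)}{\bar\alpha-\Psi_b(u)}\dd u\big)\mathbbm{1}_{(0,\Psi_b^{-1}(\bar\alpha))}(z)\,\dd z$, one checks that $Z_{\alpha,\bar\alpha}$ is finite, $\downarrow$, $C^2$ (indeed $C^\infty$) on $(0,\infty)$, bounded, and vanishes at $+\infty$; the hypothesis $\Psi_m(\Psi_b^{-1}(\bar\alpha))<\alpha$ secures integrability of $\rho$ at the upper endpoint $\Psi_b^{-1}(\bar\alpha)$ (there the inner exponent blows up like $c\log$ with $c>0$), while for $\bar\alpha=0$ the extra assumption $\int_{0+}|\Psi_b|^{-1}<\infty$ secures it at $0$. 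Consequently $\Psi_{\alpha,0}=1-\alpha Z_{\alpha,0}$ is bounded and $C^2$ on $(0,\infty)$ with $\Psi_{\alpha,0}(+\infty)=1$. As in the proof of Theorem~\ref{entrancetimeCBIculling}, I may and would assume $a>0$ (and, in part (i), $\alpha>0$), passing to $a\downarrow 0$ (resp.\ $\alpha\downarrow 0$) at the end by bounded/monotone convergence and the continuity of the scale functions; this also guarantees the required $C^2$ and boundedness on $[a,\infty)$.

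Second, the decisive ODE. Using the generator--Laplace duality $\LL^{\Psi_w}\big(\int e^{-xz}\rho(\dd z)\big)=\int\Psi_w(z)e^{-xz}\rho(\dd z)$ (valid for $x>0$, justifying differentiation under the integral sign) and splitting $\Psi_m(z)+x\Psi_b(z)=(\Psi_m(z)-\alpha)+\alpha+x(\Psi_b(z)-\bar\alpha)+x\bar\alpha$, the cross terms assemble into an exact derivative: with $E(z):=\exp(-xz-\int_0^z\frac{\alpha-\Psi_m(u)}{\bar\alpha-\Psi_b(u)}\dd u)$ the residual integrand equals $E'(z)$, so by the fundamental theorem of calculus it integrates to $E(\Psi_b^{-1}(\bar\alpha))-E(0)=0-1=-1$ (the vanishing at the upper endpoint is again exactly where $\Psi_m(\Psi_b^{-1}(\bar\alpha))<\alpha$ enters). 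This yields
\begin{equation*}
\AA Z_{\alpha,\bar\alpha}(x)-\alpha Z_{\alpha,\bar\alpha}(x)-\bar\alpha x Z_{\alpha,\bar\alpha}(x)=-1,\qquad x\in(0,\infty),
\end{equation*}
and hence, since $\AA 1=0$, one gets $\AA\Psi_{\alpha,0}=\alpha\Psi_{\alpha,0}$ when $\bar\alpha=0$; thus $\Psi_{\alpha,0}$ is an $\alpha$-eigenfunction of $\AA$ alongside $\Phi_{\alpha,0}$, the two being distinguished by their behaviour at infinity ($\Psi_{\alpha,0}(+\infty)=1$ versus $\Phi_{\alpha,0}(+\infty)=0$).

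For part (ii) ($\bar\alpha>0$) I would take $f=Z_{\alpha,\bar\alpha}$ in Theorem~\ref{prop:cbic-sde-method}\ref{prop:cbic-sde-method:mtg}\ref{prop:cbic-sde-method:mtg:II} together with Remark~\ref{remark:mtg}, so that, because $\AA Z-\alpha Z-\bar\alpha\,\mathrm{id}\,Z=-1$, the compensator reduces to $+\int_0^{t\wedge\zeta}e^{-\alpha s-\bar\alpha\int_0^sY_u\dd u}\dd s$. Stopping at $S_n\wedge\tau_a$ (with $(S_n)$ the localizing sequence, $S_n\uparrow\zeta$, $S_n<\zeta$) and taking expectations gives, for each $n$, a clean identity; letting $n\to\infty$ I would evaluate the boundary term by the skip-free dichotomy that $\tau_a<\zeta$ or $\tau_a=\infty$: on $\{\tau_a<\zeta\}$ it tends to $Z_{\alpha,\bar\alpha}(a)e^{-\alpha\tau_a-\bar\alpha\int_0^{\tau_a}Y_u\dd u}$, while on $\{\tau_a=\infty\}$ it tends to $0$, either because $Z_{\alpha,\bar\alpha}(+\infty)=0$ (on $\{\zeta<\infty\}$, via $\lim_{\zeta-}Y=\infty$ from Theorem~\ref{prop:cbic-sde-method}\ref{prop:cbic-sde-method:iv}) or because $e^{-\bar\alpha\int_0^{S_n}Y_u\dd u}\to0$ (on $\{\zeta=\infty\}$, since then $\int_0^\infty Y_u\dd u=\infty$ as $a>0$). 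Monotone convergence turns the compensator into $\PP_x[\int_0^{\sigma_a\wedge\mathsf{l}}e^{-\alpha s-\bar\alpha\int_0^s\xi_u\dd u}\dd s]$, and inserting the first-passage value $\PP_x[e^{-\alpha\tau_a-\bar\alpha\int_0^{\tau_a}Y_u\dd u};\tau_a<\zeta]=\Phi_{\alpha,\bar\alpha}(x)/\Phi_{\alpha,\bar\alpha}(a)$ from Theorem~\ref{entrancetimeCBIculling} and rearranging delivers \eqref{eq:end-times}.

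For part (i) ($\bar\alpha=0$, $\int_{0+}|\Psi_b|^{-1}<\infty$) the same scheme is run with the eigenfunction $f=\Psi_{\alpha,0}$, for which the compensator vanishes and the stopped process is simply $\Psi_{\alpha,0}(Y_\cdot)e^{-\alpha\cdot}\mathbbm{1}_{\{\cdot<\zeta\}}$. Here lies the one genuinely delicate point, and the main obstacle: because $\Psi_{\alpha,0}$ does \emph{not} vanish at infinity, this process has an uncompensated downward jump to $0$ at an explosion, so one must \emph{not} argue with a genuine martingale on $[0,\infty)$ (that would let the explosion mass leak away and yield only the trivial identity $\Psi_{\alpha,0}(x)\Phi_{\alpha,0}(a)=\Psi_{\alpha,0}(a)\Phi_{\alpha,0}(x)$); instead one works with the local martingale on $[0,\zeta)$ and approaches $\zeta$ \emph{from below} along $S_n\wedge\tau_a\uparrow\zeta\wedge\tau_a$. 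Then on $\{\tau_a=\infty,\ \zeta<\infty\}$, which the skip-free dichotomy identifies with $\{\mathsf{l}<\sigma_a\}$, one has $Y_{S_n}\to Y_{\zeta-}=\infty$, hence $\Psi_{\alpha,0}(Y_{S_n})\to\Psi_{\alpha,0}(+\infty)=1$ and the boundary term tends to $e^{-\alpha\zeta}$; on $\{\tau_a<\zeta\}$ it tends to $\Psi_{\alpha,0}(a)e^{-\alpha\tau_a}$, and on $\{\tau_a=\zeta=\infty\}$ to $0$. Bounded convergence then gives $\Psi_{\alpha,0}(x)=\Psi_{\alpha,0}(a)\PP_x[e^{-\alpha\tau_a};\tau_a<\zeta]+\PP_x[e^{-\alpha\mathsf{l}};\mathsf{l}<\sigma_a]$, and substituting the first-passage value and rearranging yields \eqref{eq:explosion.cbic}. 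As a cross-check one can instead feed in $f=Z_{\alpha,0}$, whose compensator is $\frac{1}{\alpha}(1-e^{-\alpha(t\wedge\tau_a\wedge\zeta)})$; the same algebra recovers \eqref{eq:explosion.cbic}, reassuringly via the identity $\Psi_{\alpha,0}=1-\alpha Z_{\alpha,0}$.
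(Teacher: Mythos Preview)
Your proposal is correct and follows essentially the same route as the paper: verify the inhomogeneous generator identity $\AA Z_{\alpha,\bar\alpha}-\alpha Z_{\alpha,\bar\alpha}-\bar\alpha\,\mathrm{id}\,Z_{\alpha,\bar\alpha}=-1$ (equivalently $\AA\Psi_{\alpha,\bar\alpha}=\alpha\Psi_{\alpha,\bar\alpha}+\bar\alpha\,\mathrm{id}\,\Psi_{\alpha,\bar\alpha}-\bar\alpha\,\mathrm{id}$) via the same integration-by-parts trick, feed this into the local-martingale machine of Theorem~\ref{prop:cbic-sde-method}\ref{prop:cbic-sde-method:mtg}\ref{prop:cbic-sde-method:mtg:II} with Remark~\ref{remark:mtg} after stopping at $\tau_a$, and identify the limit via $\Psi_{\alpha,0}(+\infty)=1$ (part~(i)) resp.\ $Z_{\alpha,\bar\alpha}(+\infty)=0$ (part~(ii)) together with Theorem~\ref{entrancetimeCBIculling}. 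Your write-up is in fact more explicit than the paper's on the dichotomy at the boundary (the cases $\{\tau_a<\zeta\}$, $\{\tau_a=\infty,\zeta<\infty\}$, $\{\tau_a=\zeta=\infty\}$) and on why one must approach $\zeta$ from below along the localizing sequence in part~(i); the paper subsumes all of this in the phrase ``because martingales have a constant expectation''.
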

Like in Theorem~\ref{entrancetimeCBIculling} the case $\bar\alpha=0$ is the one that is mainly of interest here. Though, \eqref{eq:end-times} has the following interpretation. The quantity $$\alpha\PP_x\left[\int_0^{\sigma_a\land \mathsf{l}}e^{-\alpha s-\bar\alpha\int_0^sY_u\dd u}\dd s\right]=\int_0^\infty \alpha e^{-\alpha s}\PP_x[e^{-\bar\alpha\int_0^sY_u\dd u}; s<\sigma_a\land \mathsf{l}]\dd s$$ is the probability that a CBM starting from $x$, and that is killed independently at rate $\alpha$, neither has reached $a$ nor has exploded nor has its running lifetime-to-date process $\int_0^\cdot Y_u\dd u$ exceeded an independent exponential random random variable of rate $\bar\alpha$,  before it is killed. In the Lamperti transform it corresponds to, ceteris paribus, $X$ being killed at rate $\alpha$, $L$ being killed at rate $\bar\alpha$, and then asking for the probability that starting from $x$, the process $Y$ is killed by $X$ before it has had a chance to be killed by $L$, to reach $a$, or to explode.

\begin{proof}
$\Psi_{\alpha,\bar\alpha}:[0,\infty)\to \mathbb{R}$ is well-defined, finite, $\uparrow\uparrow$, continuous with limit $1$ at infinity, which follows easily from the assumptions made. These properties in turn are mirrored in those of $Z_{\alpha,\bar\alpha}$. Next, we check that 
$$\mathcal{A}\Psi_{\alpha,\bar\alpha}(x)=\alpha\Psi_{\alpha,\bar\alpha}(x)+\bar \alpha x\Psi_{\alpha,\bar\alpha}(x)-\bar\alpha x,\quad x\in (0,\infty),$$
which again is just straightforward computation: 
\begin{align*}
\mathcal{A}\Phi_{\alpha,\bar\alpha}(x)=&\mathcal{L}^{\Psi_m}\Psi_{\alpha,\bar\alpha}(x)+x\mathcal{L}^{\Psi_b}\Psi_{\alpha,\bar\alpha}(x)\\
=&-\alpha\int_0^{\Psi_b^{-1}(\bar{\alpha})}\frac{\dd z}{\bar{\alpha}-\Psi_b(z)}\exp\left(-xz-\int_0^z\frac{\alpha-\Psi_m(u)}{\bar\alpha-\Psi_b(u)}\dd u\right)\left(\Psi_m(z)+x\Psi_b(z)\right)\\
=&\alpha\Psi_{\alpha,\bar\alpha}(x)+\bar \alpha x\Psi_{\alpha,\bar\alpha}(x)-\alpha-\bar\alpha x+\alpha \int_0^{\Psi_b^{-1}(\bar{\alpha})}\dd z \frac{\alpha-\Psi_m(z)}{\bar{\alpha}-\Psi_b(z)}\exp\left(-xz-\int_0^z\frac{\alpha-\Psi_m(u)}{\bar\alpha-\Psi_b(u)}\dd u\right)\\
&+\alpha x\int_0^{\Psi_b^{-1}(\bar{\alpha})}\dd z\exp\left(-xz-\int_0^z\frac{\alpha-\Psi_m(u)}{\bar\alpha-\Psi_b(u)}\dd u\right)\\
=&\alpha\Psi_{\alpha,\bar\alpha}(x)+\bar \alpha x\Psi_{\alpha,\bar\alpha}(x)-\alpha-\bar\alpha x-\alpha\left[\exp\left(-xz-\int_0^z\frac{\alpha-\Psi_m(u)}{\bar\alpha-\Psi_b(u)}\dd u\right)\big \vert_{z=0}^{\Psi_b^{-1}(\bar\alpha)}\right]\\
=&\alpha\Psi_{\alpha,\bar\alpha}(x)+\bar \alpha x\Psi_{\alpha,\bar\alpha}(x)-\bar\alpha x.
\end{align*}
It follows from Theorem~\ref{prop:cbic-sde-method}\ref{prop:cbic-sde-method:mtg}  \& Remark~\ref{remark:mtg} that for any $a\in (0,\infty)$, the  process
 \begin{align*}
M_t&:= \Psi_{\alpha,\bar\alpha}(Y(t))e^{-\alpha t-\bar\alpha\int_0^tY_s\dd s}-\Psi_{\alpha,\bar\alpha}(x)+\bar\alpha\int_0^{t}e^{-\alpha s-\bar\alpha\int_0^sY_u\dd u}Y_s\dd s,\\
&=-\alpha\left(Z_{\alpha,\bar\alpha}(Y(t))e^{-\alpha t-\bar\alpha\int_0^tY_s\dd s}-Z_{\alpha,\bar\alpha}(x)+\int_0^{t}e^{-\alpha s-\bar\alpha\int_0^sY_u\dd u}\dd s\right),\quad t\in [0,\zeta),
\end{align*}
 stopped at $\tau_a$, is a local martingale on $[0,\zeta)$ under $\PP^{\delta_x}$, $x\in [a,\infty)$. 
 
Setting $\bar\alpha=0$ we get because martingales have a constant expectation (the first form of $M$ is the most convenient, exploting $\lim_\infty\Psi_{\alpha,0}=1$), and from \eqref{eq:firs-passage.cbic}, the $\PP_x$-Laplace transform for the explosion time $\mathsf{l}$ on $\{\mathsf{l}<\sigma_a\}$, $x\in [a,\infty)$, $a\in (0,\infty)$. Letting $a\downarrow 0$ gives \eqref{eq:explosion.cbic} also for the case $a=0$. 
 
For $\bar\alpha>0$, we get similarly \eqref{eq:end-times} (but now the second form of $M$ appears to be more handy, using $\lim_\infty Z_{\alpha,\bar\alpha}=0$).
\end{proof}
\begin{corollary}\label{corollary:explosivity-refined}
The CSBP with branching mechanism $\Psi_b$ is explosive (equivalently, $\int_{0+}\vert \Psi_b\vert^{-1}<\infty$ \cite[Theorem~12.3]{Kyprianoubook}) iff the  CBM process $Y$ is explosive (i.e. $\PP^\mu(\zeta<\infty)>0$ for some, equivalently all initial distributions $\mu$ that are not concentrated at $0$).
\end{corollary}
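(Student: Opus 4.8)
The plan is to prove the two implications separately, with the CSBP-explosivity criterion $\int_{0+}|\Psi_b|^{-1}<\infty$ (Kyprianou) serving as the pivot. The backward implication is immediate: if the CBM is explosive for \emph{some} initial law not concentrated at $0$, then $\PP_z(\zeta<\infty)>0$ for some $z\in(0,\infty)$, and the contrapositive of Corollary~\ref{corollary:explosivity-I} delivers the explosivity of the CSBP.

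For the forward implication I would assume $\int_{0+}|\Psi_b|^{-1}<\infty$ and write $p(x):=\PP_x(\zeta<\infty)$. First observe $\{\mathsf{l}<\infty\}=\{\mathsf{l}<\sigma_0\}$ (an exploding path cannot previously have been absorbed at the absorbing state $0$), so $p(x)\ge\PP_x[e^{-\alpha\mathsf{l}};\mathsf{l}<\sigma_0]$ for every admissible $\alpha$. Fixing $\alpha>\max\{0,\Psi_m(\Psi_b^{-1}(0))\}$, Theorem~\ref{thm:explosions}(i) with $a=0$ gives $p(x)\ge\Psi_{\alpha,0}(x)-\frac{\Phi_{\alpha,0}(x)}{\Phi_{\alpha,0}(0)}\Psi_{\alpha,0}(0)$, whose right-hand side tends to $1$ as $x\to\infty$ (since $\Psi_{\alpha,0}\to1$ and $\Phi_{\alpha,0}\to0$ at infinity). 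Hence $p(x)>0$ for all sufficiently large $x$.

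It then remains to propagate positivity of $p$ down to every $x\in(0,\infty)$. I would first note that $p$ is nondecreasing: explosion is an increasing path functional (with coffin $\infty$), so the stochastic monotonicity corollary following Proposition~\ref{proposition:cbms-superposition} applies. Next, for $0<x<y$ I claim $\PP_x(\sigma^+_y<\sigma_0)>0$, where $\sigma^+_y$ denotes the first passage of $Y$ into $[y,\infty)$; granting this, the strong Markov property at $\sigma^+_y$ together with monotonicity yields $p(x)\ge\PP_x(\sigma^+_y<\sigma_0)\,p(y)$, and choosing $y$ in the large range where $p(y)>0$ forces $p(x)>0$. Finally, for a general initial law $\mu$ with $\mu((0,\infty))>0$ one integrates, $\PP_\mu(\zeta<\infty)=\int p\,\dd\mu>0$, which simultaneously settles the equivalence of ``some'' and ``all''.

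The crux is the reachability claim $\PP_x(\sigma^+_y<\sigma_0)>0$, and here the hypothesis does the work: $\int_{0+}|\Psi_b|^{-1}<\infty$ forces $\Psi_b'(0+)=-\infty$, hence $\int_{(1,\infty)}h\,\pi_b(\dd h)=\infty$, so $\pi_b$ has unbounded support and the branching driver $\mathcal{M}_b$ produces arbitrarily large upward jumps. Starting from $x>0$, while $Y$ stays near $x$ the rate of branching jumps of size exceeding $y$ is at least $x\,\pi_b((y,\infty))>0$; since $X$ is spectrally positive, $Y$ has no negative jumps and can only descend continuously, so with positive probability such a large jump occurs before the continuous part can drive $Y$ to $0$. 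This small-time estimate in the SDE \eqref{CBIcullingSDE} is the step I expect to require the most care; all remaining ingredients are either quoted results or routine applications of the strong Markov property and of monotonicity.
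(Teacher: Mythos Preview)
Your proof is correct, and the backward implication is handled exactly as in the paper (via Corollary~\ref{corollary:explosivity-I}). For the forward implication, however, you take a substantially longer route than necessary.

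The paper simply applies formula~\eqref{eq:explosion.cbic} at $a=0$ with any admissible $\alpha\in(\Psi_m(\Psi_b^{-1}(0))\vee 0,\infty)$ and reads off $\PP_x[e^{-\alpha\mathsf{l}};\mathsf{l}<\sigma_0]>0$ for \emph{every} $x\in(0,\infty)$ at once, using only that $\Psi_{\alpha,0}$ is strictly increasing while $\Phi_{\alpha,0}$ is strictly decreasing. There is no intermediate step of first getting positivity for large $x$ and then propagating it downward.

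Your detour through the reachability claim $\PP_x(\sigma_y^+<\sigma_0)>0$ is valid: the chain of implications $\int_{0+}|\Psi_b|^{-1}<\infty\Rightarrow\Psi_b'(0+)=-\infty\Rightarrow\int_{(1,\infty)}h\,\pi_b(\dd h)=\infty\Rightarrow\pi_b((y,\infty))>0$ for all $y>0$ is correct, and the small-time jump argument can be made rigorous (for instance by removing the atoms of $\mathcal{M}_b$ in the strip $[0,x-\epsilon]\times(y,\infty)$, noting that the resulting process agrees with $Y$ up to the first such atom, and using that the first-atom time is exponential and independent of the reduced dynamics). The monotonicity and strong Markov steps are routine. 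But all of this is machinery the paper sidesteps entirely. What the paper's approach buys is a two-line proof; what yours buys is an explicit sample-path mechanism (large branching jumps) explaining \emph{why} explosion is accessible from every positive level, which is informative even if not required here.
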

\begin{proof}
We already know that if for some initial distribution $\mu$ (that is not concentrated at $0$), $\PP^\mu(\zeta<\infty)>0$, then the CSBP with branching mechanism $\Psi_b$ is explosive (Corollary~\ref{corollary:explosivity-I}). Now suppose the latter, i.e.  $\int_{0+}\vert \Psi_b\vert^{-1}<\infty$. 
Taking  $a=0$ in \eqref{eq:explosion.cbic} (with an arbitrary $\alpha\in (\Psi_m(\Psi_b^{-1}(\bar \alpha))\lor 0,\infty)$) we get $\PP_x(\mathsf{l}<\infty)>0$ for $x\in (0,\infty)$ (just because $\Psi_{\alpha,0}$ is $\uparrow\uparrow$, while $\Phi_{\alpha,0}$ is $\downarrow\downarrow$).
\end{proof}

\bibliographystyle{plain}
\bibliography{doku}

\begin{thebibliography}{10}

\bibitem{10.1214/12-AOP766}
Mar{\'i}a~Emilia Caballero, José Luis~Pérez Garmendia, and Gerónimo~Uribe
  Bravo.
\newblock {A Lamperti-type representation of continuous-state branching
  processes with immigration}.
\newblock {\em The Annals of Probability}, 41(3A):1585--1627, 2013.

\bibitem{10.1214/09-PS154}
Mar{\'i}a~Emilia Caballero, Amaury Lambert, and Gerónimo~Uribe Bravo.
\newblock {Proof(s) of the Lamperti representation of continuous-state
  branching processes}.
\newblock {\em Probability Surveys}, 6:62--89, 2009.

\bibitem{Duhalde}
Xan Duhalde, Cl\'{e}ment Foucart, and Chunhua Ma.
\newblock On the hitting times of continuous-state branching processes with
  immigration.
\newblock {\em Stochastic Processes and their Applications},
  124(12):4182--4201, 2014.

\bibitem{fekete_fontbona_kyprianou_2019}
Dorottya Fekete, Joaquin Fontbona, and Andreas~E. Kyprianou.
\newblock Skeletal stochastic differential equations for continuous-state
  branching processes.
\newblock {\em Journal of Applied Probability}, 56(4):1122–1150, 2019.

\bibitem{feller}
William Feller.
\newblock {\em An Introduction to Probability Theory and Its Applications,
  Volume 2}.
\newblock Wiley, 1971.

\bibitem{10.1214/ECP.v17-1972}
Maria Fittipaldi and Joaquin Fontbona.
\newblock {On {SDE} associated with continuous-state branching processes
  conditioned to never be extinct}.
\newblock {\em Electronic Communications in Probability}, 17:1--13, 2012.

\bibitem{bessel-negative}
Anja Göing-Jaeschke and Marc Yor.
\newblock {A survey and some generalizations of Bessel processes}.
\newblock {\em Bernoulli}, 9(2):313--349, 2003.

\bibitem{ikeda1989stochastic}
Nobuyuki Ikeda and Shinzo Watanabe.
\newblock {\em Stochastic Differential Equations and Diffusion Processes}.
\newblock Kodansha scientific books. North-Holland, 1989.

\bibitem{jacod1987limit}
Jean Jacod and Albert~Nikolayevich Shiryaev.
\newblock {\em Limit Theorems for Stochastic Processes}.
\newblock Grundlehren der mathematischen Wissenschaften. Springer Berlin
  Heidelberg, 1987.

\bibitem{duality}
Sabine Jansen and Noemi Kurt.
\newblock {On the notion(s) of duality for Markov processes}.
\newblock {\em Probability Surveys}, 11:59--120, 2014.

\bibitem{KAW}
Kiyoshi Kawazu and Shinzo Watanabe.
\newblock Branching processes with immigration and related limit theorems.
\newblock {\em Akademija Nauk SSSR. Teorija Verojatnoste\u\i\ i ee
  Primenenija}, 16:34--51, 1971.

\bibitem{Kyprianoubook}
Andreas~E. Kyprianou.
\newblock {\em Fluctuations of {L}\'evy processes with applications}.
\newblock Universitext. Springer, Heidelberg, second edition, 2014.
\newblock Introductory lectures.

\bibitem{MR0208685}
John Lamperti.
\newblock Continuous state branching processes.
\newblock {\em Bulletin of the American Mathematical Society}, 73:382--386,
  1967.

\bibitem{leman2019extinction}
Hélène Leman and Juan~Carlos Pardo.
\newblock Extinction time of logistic branching processes in a {B}rownian
  environment.
\newblock {\em ALEA. Latin American Journal of Probability and Mathematical
  Statistics}, 18:1859–1890, 2021.

\bibitem{li-pu}
Zenghu Li and Fei Pu.
\newblock {Strong solutions of jump-type stochastic equations}.
\newblock {\em Electronic Communications in Probability}, 17:1--13, 2012.

\bibitem{MA201511}
Rugang Ma.
\newblock Lamperti transformation for continuous-state branching processes with
  competition and applications.
\newblock {\em {Statistics \& Probability Letters}}, 107:11--17, 2015.

\bibitem{palau-pardo}
Sandra Palau and Juan~Carlos Pardo.
\newblock Branching processes in a {L\'e}vy random environment.
\newblock {\em Acta Applicandae Mathematicae}, 153(1):55–79, 2018.

\bibitem{MR1725357}
Daniel Revuz and Marc Yor.
\newblock {\em Continuous martingales and {B}rownian motion}, volume 293 of
  {\em Grundlehren der Mathematischen Wissenschaften [Fundamental Principles of
  Mathematical Sciences]}.
\newblock Springer-Verlag, Berlin, third edition, 1999.

\bibitem{bernstein}
Ren\'e~L. Schilling, Renming Song, and Zoran Vondracek.
\newblock {\em Bernstein Functions: Theory and Applications}.
\newblock De Gruyter Studies in Mathematics. De Gruyter, 2012.

\bibitem{vidmar2021characterizations}
Matija Vidmar.
\newblock Some characterizations for {M}arkov processes at first passage.
\newblock 2021.
\newblock arXiv:2112.11757v3.

\bibitem{Vidmar}
Matija Vidmar.
\newblock Some harmonic functions for killed {M}arkov branching processes with
  immigration and culling.
\newblock {\em Stochastics: An International Journal of Probability and
  Stochastic Processes}, 94(4):1--24, 2022.

\end{thebibliography}
\end{document}